\newtheorem{theorem}{Theorem}[section]
\newtheorem{proposition}{Proposition} [section]
\newtheorem{definition}{Definition}[section]
\newtheorem{lemma}{Lemma}[section]
\numberwithin{equation}{section}
\newtheorem{remark}{Remark}[section]
\title{Global Approximate Controllability of the Camassa-Holm Equation by a finite dimensional Force}
\author{Shirshendu Chowdhury,  Rajib Dutta and  Debanjit Mondal}
\begin{document}
\begin{abstract}
In this paper, we consider the Camassa-Holm equation posed on the periodic domain $\mathbb{T}.$ We show that Camassa-Holm equation is globally approximately controllable by three dimensional external force in $H^s(\mathbb{T})$ for $s > \frac{3}{2}.$ The proof is based on Agrachev-Sarychev approach in geometric control theory.
\end{abstract}

\keywords{Camassa-Holm equation. Approximate controllability. Geometric control theory}
\subjclass{ 93B05; 93C20; 76B15;  }
\allowdisplaybreaks

\maketitle
\section{Introduction}
In this paper, we are interested in control problem concerning the Camassa-Holm equation on the circle $\mathbb{T} := \mathbb{R}/ 2\pi \mathbb{Z}$.
\begin{align}\label{main_equation}
u_{t} - u_{txx} + 2\kappa u_{x} + 3uu_{x} = 2u_{x}u_{xx} + uu_{xxx}  \hspace{1cm} \text{ for } \ (t , x) \in (0, T) \times \mathbb{T}.
\end{align}
The Camassa-Holm equation describes one-dimensional surface waves at a free surface of shallow water under the influence of gravity. The function $u(t,x)$ represents the fluid velocity at time $t$ and position $x$, and the constant $\kappa$ is a non negative parameter in \eqref{main_equation} . The equation was first introduced by Fokas and Fuchssteiner \cite{Fokas_81} as a bi-hamiltonian model, and was derived as a water wave model by Camassa and Holm \cite{Camassa_Holm_93}. Moreover one can describe the periodic Camassa-Holm equation as the geodesic equation on the diffeomorphism group of the circle or on the Bott-Virasoro group (see Misiolek \cite{Misiolek_2002}).

\vspace{5mm}
We consider the following control problem $\kappa = \frac{1}{2}$ for simplicity in calculation; the same can be done for general $\kappa.$
\begin{equation}\label{ctrl_equn}
\begin{cases}
u_{t} - u_{txx} + u_x + 3uu_{x} = 2u_{x}u_{xx} + uu_{xxx} + \eta(t,x)  \hspace{1cm} \text{for}\ (t,x) \in (0,T) \times \mathbb{T},\\ u(0,x) = u_{0}(x),    
\end{cases}
\end{equation}

where $T > 0$, $u_0$ is the initial value, and $\eta$ is a control. We will discuss the approximate controllability of \eqref{ctrl_equn}. More precisely, it will be  proved that for any given $u_0,u_1$ in some suitable spaces, we can find a finite-dimensional control $\eta$ such that the solution of \eqref{ctrl_equn}  can be steered to an arbitrary small neighborhood of $u_1$ in time $T$ starting from $u_0$. There is no restriction on control time $T$ and the amplitude of $u_0,u_1$. Although there are lots of existing results of controllability of Camassa-Holm Equation, such as in this paper \cite{Oliver_08} the author proved the exact controllability of \eqref{main_equation} with localized interior control but in our case the control $\eta$ takes values in a finite-dimensional space. This kind of control has theoretical significance and wide application in physics and engineering. To obtain the desired result, we adopt the Agrachev-Sarychev approach.

Let us define $\Lambda^s := (1 - \partial_{xx})^{\frac{s}{2}}$, the pseudo-differential operator $\Lambda^s $ is defined for any $s \in \mathbb{R}$ on a test function $f$ by $$\widehat{\Lambda^s f}(\xi) = (1 + \xi^2)^{\frac{s}{2}} \hat{f}(\xi), $$ where $\hat{f}$ denotes the Fourier transformation of a function $f$ on the circle $\mathbb{T} = \mathbb{R}/2\pi \mathbb{Z}$, for $\xi \in \mathbb{Z}$ $$\hat{f}(\xi) = \int_{\mathbb{T}} e^{-i\xi x} f(x) dx . $$ 
Also, we recall that for any $s \in \mathbb{R}$ the sobolev space $H^s = H^s(\mathbb{T})$ is defined by $$H^s(\mathbb{T}) = \left\{ f \in \mathcal{D}'(\mathbb{T}) : \|f\|_{H^s} = \|\Lambda^s f\|_{L^2} \simeq \left(\sum_{\xi \in \mathbb{Z}} \left(1 + \xi^2\right)^{s} |\hat{f}(\xi)|^2\right)^{\frac{1}{2}} < \infty \right\}.$$

It is known fact that the periodic Camassa-Holm Equation is well-posed in $H^s$ for $s > \frac{3}{2},$ Danhin proved the well-posedness in the paper \cite{Danchin_2001} using Besov space and in the paper \cite{Himonas_2001} Himonas and Misiolek proved the same using Friedrichs mollifier. See also \cite{de_Lellis_07}.

\begin{remark}\label{S>3/2}
If $ s > \frac{3}{2} $ and $ f \in H^s(\mathbb{T}),  \  $ then $ f_x \in L^{\infty}(\mathbb{T}),$ this fact is crucially used to prove wellposedness of our the control system in Section $ 4.$   
\end{remark}
That's why we are looking for Approximate controllability result in $H^s$, $s > \frac{3}{2}.$
 
 \begin{definition}\label{approxi_cntrl}
We say the equation \eqref{ctrl_equn} is approximately controllable in $H^s(\mathbb{T})$ by values in $\mathcal{H}_0$ if for any $T > 0, \varepsilon > 0$ and any $u_{0}, u_1 \in H^s(\mathbb{T})$, there is a piece wise constant control with values in $\mathcal{H}_0$ and a solution $u$ of \eqref{ctrl_equn} such that $$\| u(T) - u_1\|_{H^s} \le \varepsilon.$$
 \end{definition}
Our main result is the following theorem. 
\begin{theorem}\label{main_thm}
For $ s > \frac{3}{2},$ equation \eqref{ctrl_equn} is approximately controllable in $H^s(\mathbb{T})$ by a piece wise constant controls with values in $\mathcal{H} ,$ where $$\mathcal{H} = \text{span}\{1, \sin(x), \cos(x)  \}.$$
\end{theorem}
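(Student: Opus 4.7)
My plan is to apply the Agrachev-Sarychev geometric control scheme. In non-local form,
\begin{equation*}
u_t + u u_x + \partial_x \Lambda^{-2}\!\bigl( u^2 + \tfrac{1}{2} u_x^2 + \tfrac{1}{2} u \bigr) = \Lambda^{-2} \eta,
\end{equation*}
the quadratic nonlinearity reads $\mathcal{Q}(u,u)$ for the symmetric bilinear form $\mathcal{Q}(v,w) := \tfrac{1}{2}(vw)_x + \partial_x \Lambda^{-2}(vw + \tfrac{1}{2} v_x w_x)$; since the generators of $\mathcal{H}$ are eigenfunctions of $\Lambda^{-2}$, the effective forcing space $\Lambda^{-2}\mathcal{H}$ coincides with $\mathcal{H}$. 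I would then introduce the saturating chain $\mathcal{H}_0 := \mathcal{H}$ and $\mathcal{H}_{j+1} := \mathcal{H}_j + \mathrm{span}\{ \mathcal{Q}(v,w) : v,w \in \mathcal{H}_j \}$, $\mathcal{H}_\infty := \bigcup_{j \geq 0} \mathcal{H}_j$, and split the proof into two statements: (S) $\mathcal{H}_\infty$ is dense in $H^s(\mathbb{T})$; (E) approximate controllability by $\mathcal{H}_{j+1}$-valued piecewise-constant controls implies the same with values in $\mathcal{H}_j$. Approximate controllability by controls valued in $\mathcal{H}_N$ for $N$ large is elementary (approximate the target by a Galerkin truncation), so finitely many applications of (E) yield Theorem \ref{main_thm}.

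\textbf{Saturation (algebraic step).} For (S), a direct Fourier computation on the generators $\{1, \cos x, \sin x\}$ gives, e.g., $\mathcal{Q}(\cos x, \sin x) = \tfrac{3}{5} \cos 2x$ and $\mathcal{Q}(\cos x, \cos x) = -\tfrac{3}{5} \sin 2x$, so $\cos 2x, \sin 2x \in \mathcal{H}_1$. Inductively, the products $\mathcal{Q}(\cos kx, \cos lx)$, $\mathcal{Q}(\cos kx, \sin lx)$, and $\mathcal{Q}(\sin kx, \sin lx)$ produce non-degenerate linear combinations of $\cos(k \pm l) x$ and $\sin(k \pm l) x$, yielding $\mathcal{H}_j \supset \mathrm{span}\{\cos kx, \sin kx : 0 \leq k \leq 2^j\}$. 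Density of trigonometric polynomials in $H^s$ then gives (S).

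\textbf{Extension lemma, and the main obstacle.} Step (E) is the crux and is where I expect the main difficulty. The underlying Agrachev-Sarychev convexification idea is: on each constancy interval one writes the $\mathcal{H}_{j+1}$-valued control as $\eta_0 + \mathcal{Q}(v,w)$ with $\eta_0, v, w \in \mathcal{H}_j$, and replaces the quadratic piece by a fast $\mathcal{H}_j$-valued oscillation $\varepsilon^{-1}\xi_\varepsilon(t)$ whose time-averaged self-interaction through the equation's own nonlinearity reproduces $\mathcal{Q}(v,w)$ in the limit $\varepsilon \to 0$. Making this rigorous rests on two analytic ingredients. First, well-posedness of \eqref{ctrl_equn} in $H^s$ for $s > \tfrac{3}{2}$ under $L^1_t H^s_x$ forcing, with locally Lipschitz dependence of the solution on $(u_0, \eta)$; because of the quasi-linear term $u u_{xxx}$, this requires a Kato-type energy method with Kato-Ponce commutator estimates on the non-local $\partial_x \Lambda^{-2}$ term, which is where Remark \ref{S>3/2} is invoked. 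Second, and more delicate, is a uniform $H^s$ bound on the perturbed solution as $\varepsilon \to 0$ despite the size-$\varepsilon^{-1}$ forcing. I would obtain this via a change of unknown $u_\varepsilon = w_\varepsilon + \psi_\varepsilon$ that absorbs the fast oscillation into the transport structure, so that $w_\varepsilon$ solves Camassa-Holm with a bounded forcing converging to $\eta_0 + \mathcal{Q}(v,w)$; controlling the commutators produced by $\partial_x \Lambda^{-2}$ in this reduction is the principal technical hurdle. Once this uniform bound is in hand, passing to the limit yields (E), and iterating it completes the proof.
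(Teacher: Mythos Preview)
Your proposal follows the same Agrachev--Sarychev paradigm as the paper, and the saturation step~(S) is essentially identical (your bilinear $\mathcal{Q}(v,w)$ is the polarization of the paper's quadratic form, and the Fourier computations match). The organization of the main argument, however, differs in two ways worth noting.

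First, the paper does \emph{not} run the backward ``extension lemma'' (E) you describe. Instead it proves directly, by forward induction on $N$, that with $\mathcal{H}_0$--valued controls one can steer from $u_0$ arbitrarily close to any point of $u_0+\mathcal{H}_N$. The inductive step is driven by a precise short--time asymptotic (the paper's Proposition~2.4): if one shifts the state by a large \emph{constant} profile $\delta^{-1/2}\varphi$ and applies the large \emph{constant} control $\delta^{-1}\eta_0$ on the interval $[0,\delta]$, then the solution at time $\delta$ converges in $H^s$ to
\[
u_0 - \varphi\varphi_x + \Lambda^{-2}\bigl(\eta_0 - 2\varphi\varphi_x - \varphi_x\varphi_{xx}\bigr).
\]
The shift in state is realized via the identity $\mathcal{R}_t(u_0+\varphi,0,\eta)=\mathcal{R}_t(u_0,\varphi,\eta)+\varphi$, so the whole mechanism uses only piecewise--constant $\mathcal{H}_0$--valued controls; no genuinely oscillatory $\varepsilon^{-1}\xi_\varepsilon(t)$ is needed, and the delicate ``time--averaged self--interaction'' limit you anticipate is replaced by a single scaling computation. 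This is both simpler and better adapted to the piecewise--constant control class in the statement. Your change of unknown $u_\varepsilon=w_\varepsilon+\psi_\varepsilon$ is exactly this idea once $\psi_\varepsilon$ is taken constant in $t$; the ``fast oscillation'' picture is an unnecessary layer.

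Second, you stop at small--time approximate controllability. The paper needs an additional Step~4 to pass to a prescribed time $T$: after reaching a neighbourhood of $u_1$ in small time, one lets the system drift with zero control for a short interval $\tau$ (using continuity of the flow to stay close), then re--applies small--time controllability to return near $u_1$, iterating finitely many times until the total time exceeds $T$. This step is elementary but not automatic and should be mentioned.
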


Approximate controllability of PDEs by additive finite-dimensional forces has been studied by many authors in the recent years. The first results are obtained by Agrachev and Sarychev \cite{Agrachev_Sarychev_05, Agrachev_Sarychev_06}, who considered the Navier-Stokes and Euler systems on the two dimensional tours. Their approach has been generalized by Shirikyan \cite{Shirikyan_06, Shirikyan_07} to the case of three dimensional Navier-Stokes system; see also the papers \cite{Shirikyan_14, Shirikyan_17} by Shirikyan, where the Burgers equation is considered on real line and  the bounded interval with Dirichlet boundary conditions.  In the periodic setting, Nersisyan \cite{Nersisyan_10,Nersisyan_11} considered three dimensional Euler systems for perfect compressible and incompressible fluids, Sarychev \cite{Sarychev_12} studied the two dimensional cubic Schrodinger equation.

The proof of the Theorem $1.1$ is based on a technique of applying large controls on short time intervals. Previously, such ideas have been used mainly in the studied of finite-dimensional control system; example, see the works of Jurdjevic and Kupka \cite{Jurdjevic_85, Jurdjvic_97}.  Then infinite-dimensional extensions of this technique appear in the above-cited papers of Agrachev-Sarychev. More recently this approach has been used in the paper Glatt-Holtz, Herzog, and Mattingly \cite{Glatt_18}, where , in particular, a 1D parabolic PDE is considered with polynomial nonlinearity of odd degree, and Shirikyan did for Burger equation with Dirichlet boundary condition See \cite{Shirikyan_18} then in the paper of Narsesyan \cite{Narsesyan_21}, where the nonlinearity is a smooth function that grows polynomially without any restriction on the degree and on the space dimension. Then the authors Mo Chen in the paper \cite{Chen23} and the author Melek Jellouli in the paper \cite{Jellouli_23} using same technique prove the result for Korteweg-de Vries Equation and BBM Equation respectively.

As we have discussed, the idea of the proof of the main theorem is motivated by many recent works related
Agrachev-Sarychev method. However, to use the ideas in the Camassa-Holm equation, we will encounter some
difficulties that demand special attention, and some new tools will be needed. we first prove that trajectory of \eqref{ctrl_equn} can be steered close to any target $u_1$ belongs to the set $u_0 + \mathcal{H}_0$ in small time, where $\{\mathcal{H}_j \}_{j \ge 0}$ is a non-decreasing sequence of subspaces defined in Section $2.$  By an iterating argument, we show that starting $u_0,$ the trajectory can attain approximately any point in $u_0 + \mathcal{H}_N$ for any $N \in \mathbb{N}.$ In this step, the key point is the following asymptotic property  
$$u(\cdot , \delta) \rightarrow u_0 - \varphi\varphi_x + (1 - \partial_{xx})^{-1}\left(\eta - 2\varphi\varphi_x - \varphi_x\varphi_{xx}\right), \text{in}\ H^s(\mathbb{T}) \text{ as }\ \delta \rightarrow 0^+. $$
where $ u $ is the solution of 
\begin{align*}
\begin{cases}
u_{t}-u_{txx} + (u+\delta^{-\frac{1}{2}}\varphi)_x + 3(u+\delta^{-\frac{1}{2}}\varphi)(u+\delta^{-\frac{1}{2}}\varphi)_{x} - 2(u+\delta^{-\frac{1}{2}}\varphi)_{x}(u+\delta^{-\frac{1}{2}}\varphi)_{xx}\\
\hspace{6cm}- (u+\delta^{-\frac{1}{2}}\varphi)(u+\delta^{-\frac{1}{2}}\varphi)_{xxx} = \delta^{-1} \eta \\
u(0,x)=u_0(x).
\end{cases}
\end{align*}
Then by the fact that $\bigcup_{n = 1}^{\infty} \mathcal{H}_{n - 1}$ is dense in $H^s(\mathbb{T}),$ we can see that system \eqref{ctrl_equn} is approximately controllable in small time. Finally, applying the well-posedness and stability of \eqref{ctrl_equn}, we can keep the trajectory close to terminal state $u_1$ for any time $T.$ Remaining of the paper organised as follows:
\begin{itemize}
    \item In Section $2,$ we state the required propositions and prove density of $\mathcal{H}_N$ in $H^s$ with algebraic property of resolvent map. 
    \item In Section $3$ we prove the Theorem \ref{main_thm}.
    \item Section $4$ is devoted to the proposition used in the proof of Theorem \ref{main_thm}.
    \item Finally in section $ 5 $ we have constructed a explicit control for a simple case.
\end{itemize}

\vspace{5mm}
\section{Existence and properties.} For the technical reasons that we will see below, we introduce a smooth function $\varphi(x) $. So we consider the following Camassa-Holm Equation on torus
\begin{align} \label{4}
\begin{cases}
u_{t}-u_{txx} + (u+\varphi)_x + 3(u+\varphi)(u+\varphi)_{x} = 2(u+\varphi)_{x}(u+\varphi)_{xx} + (u+\varphi)(u+\varphi)_{xxx}+\Tilde{\eta}(t,x)  \\ \hspace{10cm}\text{ for }\  t > 0 , x \in  \mathbb{T}\\
u(0, x) = u_0(x)    
\end{cases}
\end{align}

where $\Tilde{\eta}(t) \in L^2(\mathbb{T}) $. For $u_0 \in H^s(\mathbb{T})$, the solution of \eqref{4} at time $t$, with a control term $\Tilde{\eta}$, is denoted $u(t) = \mathcal{R}_{t}(u_0, \varphi, \Tilde{\eta}) $. Note that the function $v = u + \varphi $ is solution of the equation \eqref{ctrl_equn} with initial condition $ u_0 + \varphi.$ The equation \eqref{4} can be written under the form 
\begin{align}\label{inverse_equn}
\begin{cases}
u_t = A(u+\varphi)-(u+\varphi)\partial_x(u+\varphi)-(1-\partial_{xx})^{-1}\Big[2(u+\varphi)\partial_x (u+\varphi) + \partial_x(u+\varphi)\partial_{xx}(u+\varphi)\Big] + f\\
u(0, x) = u_0(x)
\end{cases}
\end{align}
Where $A = -\Lambda^{-2}\partial_x = - (1 - \partial_{xx})^{-1} \partial_x $ and $f = \Lambda^{-2}\Tilde{\eta} = (1 - \partial_{xx})^{-1}\Tilde{\eta}.$ We know that Since $A$ is bounded then it is the infinitesimal generator of a uniformly continuous semigroup $\{e^{tA}\}_{t\ge 0}.$ 

We start by studying the existence of the solution as well as some estimations that we will use in following. Let $\delta > 0, \varphi(x) \ \text{and}\ f(x)$ two smooth functions, we consider the equation

\begin{equation}\label{gen_equn}
\begin{cases}
u_{t}-u_{txx} + (u+\delta^{-\frac{1}{2}}\varphi)_x + 3(u+\delta^{-\frac{1}{2}}\varphi)(u+\delta^{-\frac{1}{2}}\varphi)_{x} - 2(u+\delta^{-\frac{1}{2}}\varphi)_{x}(u+\delta^{-\frac{1}{2}}\varphi)_{xx}\\
\hspace{4cm}- (u+\delta^{-\frac{1}{2}}\varphi)(u+\delta^{-\frac{1}{2}}\varphi)_{xxx} = \delta^{-1}f\ \hspace{.5cm} \text{ for }\  t > 0, x \in  \mathbb{T},\\
u(0,x)=u_0(x).
\end{cases}
\end{equation}

\begin{proposition} [Well-posedness]\label{prp_existance}
 For $ s > \frac{3}{2},\ u_0  \in H^s(\mathbb{T})$ , $\varphi \in H^{s + 1}(\mathbb{T})$ and $f \in L^2_{loc}(R^+; H^{s - 2}(\mathbb{T}))$ there exists time $0 < T_{*} := T_{*}(u_0, \varphi, f) $ such that system \eqref{inverse_equn} admits a unique solution $u \in C([0,T_{*}]; H^s(\mathbb{T})).$
\end{proposition}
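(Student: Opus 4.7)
The plan is to establish local well-posedness by combining Friedrichs mollifier regularization with a Sobolev energy estimate, in the spirit of the Himonas--Misiolek proof for the classical periodic Camassa--Holm equation. First I would rewrite \eqref{inverse_equn} in transport form
\begin{align*}
u_t + (u+\varphi)\partial_x u &= -(u+\varphi)\partial_x\varphi + A(u+\varphi) \\
&\quad - (1-\partial_{xx})^{-1}\bigl[2(u+\varphi)\partial_x(u+\varphi) + \partial_x(u+\varphi)\partial_{xx}(u+\varphi)\bigr] + f,
\end{align*}
so the problem reads ``transport $+$ source'' with the source controllable in $H^s$: indeed $H^s$ is a Banach algebra for $s > \tfrac{1}{2}$, and $(1-\partial_{xx})^{-1}$ smooths by two derivatives, so even the borderline product $\partial_x v \cdot \partial_{xx} v \in H^{s-2}$ lands in $H^s$ after inversion.

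Next I would regularize by Friedrichs mollifiers $J_\varepsilon$, solve the resulting ODE in $H^s$ on a short interval by Cauchy--Lipschitz, and search for a uniform-in-$\varepsilon$ estimate. After applying $\Lambda^s$ and testing against $\Lambda^s u^\varepsilon$, the only dangerous term is $\langle \Lambda^s[(u^\varepsilon + \varphi)\partial_x u^\varepsilon], \Lambda^s u^\varepsilon\rangle$, which I would treat by the commutator trick: subtracting $(u^\varepsilon+\varphi)\Lambda^s\partial_x u^\varepsilon$ and integrating by parts eliminates the top-order derivative on $u^\varepsilon$, while the remainder is controlled by the Kato--Ponce estimate
\begin{equation*}
\|[\Lambda^s, g]\partial_x h\|_{L^2} \lesssim \|g_x\|_{L^\infty}\|h\|_{H^s} + \|g\|_{H^s}\|h_x\|_{L^\infty}.
\end{equation*}
Combined with the algebra inequality, this yields a bound of the form
\begin{equation*}
\tfrac{d}{dt}\|u^\varepsilon\|_{H^s}^2 \leq C\bigl(1 + \|u^\varepsilon\|_{H^s} + \|\varphi\|_{H^{s+1}}\bigr)\|u^\varepsilon\|_{H^s}^2 + G(t),
\end{equation*}
with $G$ depending only on $\varphi$ and $\|f(t)\|_{H^{s-2}}$, so Gronwall gives a common lifespan $T_* > 0$ and a uniform bound in $C([0,T_*];H^s)$.

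To finish, I would pass to the limit $\varepsilon \to 0$: a Cauchy-in-$H^{s-1}$ estimate on the differences $u^{\varepsilon_1} - u^{\varepsilon_2}$ (where a full derivative of regularity is available, so no commutator is needed) together with weak-$*$ compactness in $L^\infty_t H^s$ and interpolation produces a limit $u \in C([0,T_*];H^s)$ satisfying \eqref{inverse_equn}. Uniqueness follows from the same $H^{s-1}$ energy estimate applied to the difference of two $H^s$ solutions, plus Gronwall.

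The main obstacle is the classical quasilinear derivative loss in $(u+\varphi)\partial_x u$: naive energy estimates at the $H^s$ level blow up, and only the commutator structure above closes the Gronwall loop. This step relies crucially on $u_x \in L^\infty$, i.e.\ the threshold $s > \tfrac{3}{2}$ flagged in Remark~\ref{S>3/2}. A secondary subtlety is the assumption $\varphi \in H^{s+1}$ rather than just $H^s$: this extra derivative is forced by the term $\partial_x\varphi\,\partial_{xx}\varphi$ sitting inside the $(1-\partial_{xx})^{-1}$ smoothing, which needs two honest derivatives of $\varphi$ in $H^{s-1}$ for the source $G(t)$ above to be locally integrable in time.
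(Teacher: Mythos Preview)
Your proposal is correct and follows essentially the same route as the paper: rewrite \eqref{inverse_equn} as transport plus a smoothed source, regularize with Friedrichs mollifiers, close an $H^s$ energy estimate via the Kato--Ponce commutator bound (this is exactly where $s>\tfrac32$ enters through $\|u_x\|_{L^\infty}$), and obtain a uniform lifespan. Two minor remarks: the resulting differential inequality is genuinely superlinear in $\|u^\varepsilon\|_{H^s}$, so ``Gronwall'' should be read as a nonlinear comparison argument (the paper integrates $\dot h \le C(1+h^2)$ to a $\tan$ bound); and the need for $\varphi\in H^{s+1}$ is already forced by the undifferentiated source $(u+\varphi)\partial_x\varphi$ in $H^s$, not only by the term inside $(1-\partial_{xx})^{-1}$. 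Your outline of the limit $\varepsilon\to 0$ (Cauchy in $H^{s-1}$, weak-$*$ compactness, interpolation) and of uniqueness is in fact more detailed than what the paper writes, which contents itself with the a~priori bound and a brief ``taking $\varepsilon\to 0$''.
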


\begin{proposition}[Stability]\label{prp_cntity}
For $s > \frac{3}{2}$ and given  $u_0, v_0,  \in H^{s + 1}(\mathbb{T}),$ and $g \in L^2_{loc}(R^+; H^{s - 2}(\mathbb{T}))$ there exists time $T > 0 $ and  constants $c,$ such that, for all $t \le T$
\begin{align}\label{lips_property}
\|\mathcal{R}_t(u_0, 0, g) - \mathcal{R}_t(v_0, 0, g)\|_{H^s}\le c \|u_0 - v_0\|_{H^s} 
\end{align}
\end{proposition}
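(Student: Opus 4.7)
The plan is to perform a classical $H^s$ energy estimate for the difference $w := u-v$, where $u := \mathcal{R}_t(u_0,0,g)$ and $v := \mathcal{R}_t(v_0,0,g)$, exploiting the extra regularity $u_0,v_0 \in H^{s+1}$ to absorb the derivative loss in the transport-like nonlinearity. First I would invoke Proposition \ref{prp_existance} at the level $s+1$ (persistence of regularity): since $s+1 > 3/2$ as well, both $u$ and $v$ live in $C([0,T];H^{s+1}(\mathbb{T}))$ on a common time interval $T = T(\|u_0\|_{H^{s+1}}, \|v_0\|_{H^{s+1}}, g) > 0$, with uniform bound $M := \sup_{[0,T]}(\|u\|_{H^{s+1}} + \|v\|_{H^{s+1}}) < \infty$. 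Subtracting the two copies of \eqref{inverse_equn} (with $\varphi=0$) the forcing $f$ cancels, and $w$ solves
\begin{equation*}
w_t = A w - (u w_x + v_x w) - (1-\partial_{xx})^{-1}\!\left[\,\partial_x(u^2 - v^2) + \tfrac{1}{2}\partial_x(u_x^2 - v_x^2)\right],\qquad w(0) = u_0 - v_0,
\end{equation*}
where I used $uu_x - vv_x = \tfrac12\partial_x(u^2-v^2)$ and $u_x u_{xx} - v_x v_{xx} = \tfrac12\partial_x(u_x^2 - v_x^2)$ for the low-order (resolvent-regularized) terms, and the asymmetric split $uu_x - vv_x = u w_x + v_x w$ for the transport term that will be hit with $\Lambda^s$.

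Next I would apply $\Lambda^s$ and pair with $\Lambda^s w$ in $L^2$. The linear term contributes nothing: $A = -(1-\partial_{xx})^{-1}\partial_x$ commutes with $\Lambda^s$ and is skew-adjoint on $L^2$, so $\langle \Lambda^s A w, \Lambda^s w\rangle = 0$. The resolvent-regularized terms are controlled by writing them in the form $(1-\partial_{xx})^{-1}\partial_x(\,\cdot\,)$; the operator $(1-\partial_{xx})^{-1}\partial_x$ is bounded from $H^{s-1}$ to $H^s$, and since $s-1 > 1/2$, $H^{s-1}(\mathbb{T})$ is a Banach algebra, yielding
\begin{equation*}
\|u^2 - v^2\|_{H^{s-1}} + \|u_x^2 - v_x^2\|_{H^{s-1}} \le C\,(\|u\|_{H^s} + \|v\|_{H^s} + \|u_x\|_{H^{s-1}} + \|v_x\|_{H^{s-1}})\,\|w\|_{H^s} \le CM\|w\|_{H^s}.
\end{equation*}
For the transport piece I split $\langle \Lambda^s(uw_x),\Lambda^s w\rangle = \langle [\Lambda^s, u]w_x, \Lambda^s w\rangle - \tfrac12\langle u_x\,\Lambda^s w, \Lambda^s w\rangle$ and invoke the Kato--Ponce commutator estimate together with the Sobolev embedding $H^s \hookrightarrow W^{1,\infty}$ (Remark \ref{S>3/2}) to bound this by $C\|u\|_{H^s}\|w\|_{H^s}^2$. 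The remaining piece $\langle \Lambda^s(v_x w),\Lambda^s w\rangle$ is estimated directly using the algebra property of $H^s$, producing $C\|v_x\|_{H^s}\|w\|_{H^s}^2 = C\|v\|_{H^{s+1}}\|w\|_{H^s}^2$; this is precisely where the $H^{s+1}$-hypothesis on the data enters.

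Collecting everything yields
\begin{equation*}
\frac{d}{dt}\|w(t)\|_{H^s}^2 \le C\bigl(\|u(t)\|_{H^s} + \|v(t)\|_{H^{s+1}}\bigr)\|w(t)\|_{H^s}^2 \le C M \|w(t)\|_{H^s}^2 \qquad \text{for all } t \in [0,T],
\end{equation*}
and Gr\"onwall's inequality gives $\|w(t)\|_{H^s} \le e^{CMt/2}\|u_0-v_0\|_{H^s}$, which is the desired bound with $c := e^{CMT/2}$.

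The main obstacle is the transport term: direct $H^s$-multiplication would require controlling $\|u w_x\|_{H^s}$, and hence $\|w\|_{H^{s+1}}$, which is unavailable. The symmetrization via commutator plus the asymmetric split (throwing the derivative onto $v$ and paying with one extra derivative on $v$) is what makes the argument close. A secondary technical point is verifying that at the level $s+1$ the hypothesis $g \in L^2_{loc}(\mathbb{R}^+;H^{s-2})$ is still strong enough to apply Proposition \ref{prp_existance} for the persistence step; if not, one would need to slightly strengthen the regularity assumption on $g$, or recover the required bound on $\|u\|_{H^{s+1}},\|v\|_{H^{s+1}}$ by a separate energy estimate at level $s+1$ using only the data bound together with $g$.
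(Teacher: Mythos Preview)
Your argument is correct and follows essentially the same strategy as the paper: invoke well-posedness at level $s+1$ to get a uniform $H^{s+1}$ bound on $u,v$, subtract the equations, do an $H^s$ energy estimate on $w$ using a Kato--Ponce commutator for the transport term and the $H^{s-1}$ algebra property for the resolvent-regularized pieces, then apply Gr\"onwall. The only real differences are cosmetic: the paper uses the symmetric splitting $uu_x-vv_x=\tfrac12\partial_x((u+v)w)$ rather than your asymmetric $uw_x+v_xw$ (both need one factor in $H^{s+1}$), and the paper inserts a Friedrichs mollifier $J_\varepsilon$ to justify the formal $H^s$ pairing and integration by parts before passing to the limit $\varepsilon\to0$; you should include this regularization step as well, since for $u\in C([0,T];H^{s+1})$ the direct pairing $\langle u\,\Lambda^s w_x,\Lambda^s w\rangle$ is not obviously well-defined. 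Your caveat about $g$ is also on point: applying Proposition~\ref{prp_existance} at regularity $s+1$ strictly requires $g\in L^2_{loc}(H^{s-1})$, not just $H^{s-2}$, and the paper glosses over this in the same way.
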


\begin{remark}\label{rmrk_uniqness}
From the uniqueness of the solution we have the equality for all $t \in [0, \delta]$
\begin{equation}\label{uniqueness_property}
\mathcal{R}_t(u_0, \varphi, \eta) = \mathcal{R}_t(u_0 + \varphi, 0, \eta) - \varphi
\end{equation}
i.e solution of the equations \eqref{ctrl_equn} and\eqref{4}  are related by this equation.   
\end{remark} 

For any subspace $G,$ we denote the space
$$\mathcal{F}(G) := \text{span} \Bigg\{\eta - \sum _{i=1}^{d} \varphi_{i}\partial_x\varphi_i - (1 - \partial_{xx})^{-1} \sum _{i=1}^{d} \big(2\varphi_i\partial_x\varphi_i + \partial_x \varphi_i \partial_{xx}\varphi_i \big) ; \eta, \varphi_i \in G, \forall d \geq 1\Bigg\}. $$
 Note that this space $\mathcal{F}(G)$ is defined through the nonlinear term present in the equation.

\newpage
Then we can construct a sequence of finite-dimensional spaces:

$$\mathcal{H}_0 = \mathcal{H} \text{ , } \mathcal{H}_n = \mathcal{F}(\mathcal{H}_{n - 1}) \text{ , } n \ge 1 \text{ , } \mathcal{H}_{\infty} = \bigcup_{n = 1}^{\infty} \mathcal{H}_{n - 1}  $$

\begin{definition}\label{saturating}
We say that $\mathcal{H}$ is saturating if $\mathcal{H}_{\infty}$ is dense in $H^s(\mathbb{T}).$
\end{definition}

\begin{proposition}[Density]\label{prp_saturating}
The space $\mathcal{H}$ is saturating.
\end{proposition}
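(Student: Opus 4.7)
The plan is to show by induction on $N \ge 1$ that
\[
V_N := \mathrm{span}\{1, \cos(kx), \sin(kx) : 1 \le k \le N\} \subseteq \mathcal{H}_{N-1},
\]
from which saturation follows immediately because trigonometric polynomials are dense in $H^s(\mathbb{T})$. Two simple reductions of the definition of $\mathcal{F}(G)$ are the starting point: taking $d=1$ with $\varphi_1 = 0$ shows $G \subseteq \mathcal{F}(G)$; taking $\eta = 0$ and $d=1$ shows that
\[
Q(\varphi) := -\varphi\varphi_x - (1-\partial_{xx})^{-1}\bigl(2\varphi\varphi_x + \varphi_x\varphi_{xx}\bigr)
\]
lies in $\mathcal{F}(G)$ for every $\varphi \in G$. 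Since $\mathcal{F}(G)$ is a linear span, the symmetric bilinear form $B(\varphi,\psi) := Q(\varphi+\psi) - Q(\varphi) - Q(\psi)$ is also in $\mathcal{F}(G)$ whenever $\varphi, \psi \in G$.

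For the inductive step I would pass to the complex basis $e_k(x) := e^{ikx}$, using that $(1-\partial_{xx})^{-1}$ acts on $e_m$ by multiplication by $(1+m^2)^{-1}$. A direct computation of the cross term in $Q(e_j+e_k)$ yields
\[
B(e_j, e_k) = -i(j+k)\,\frac{j^2 + jk + k^2 + 3}{1 + (j+k)^2}\, e_{j+k}.
\]
Because $j^2+jk+k^2 = (j+k/2)^2 + 3k^2/4 \geq 0$, the coefficient of $e_{j+k}$ vanishes only when $j+k = 0$. Translating back to the real basis via the product-to-sum identities, for $V_N \subseteq \mathcal{H}_{N-1}$ the expressions $B(\cos(Nx), \cos x)$ and $B(\sin(Nx), \cos x)$ reduce to a nonzero multiple of $\sin((N+1)x)$ plus a multiple of $\sin((N-1)x)$, respectively a nonzero multiple of $\cos((N+1)x)$ plus a multiple of $\cos((N-1)x)$. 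The lower-frequency terms already lie in $V_{N-1} \subseteq \mathcal{H}_{N-1} \subseteq \mathcal{H}_N$, so one recovers $\sin((N+1)x), \cos((N+1)x) \in \mathcal{H}_N$, closing the induction from the base case $V_1 = \mathcal{H}_0$.

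The main obstacle I foresee is verifying that the Fourier coefficient of $e_{j+k}$ in $B(e_j, e_k)$ does not vanish when $j+k \neq 0$: a priori the local transport contribution $-(\varphi\psi)_x$ could cancel against the nonlocal correction coming from $(1-\partial_{xx})^{-1}$, which would prevent the bootstrap from reaching new modes. The identity above shows this cannot occur because the quadratic $j^2 + jk + k^2 + 3$ is strictly positive. Once this algebraic nonvanishing is secured, the inductive reach to arbitrary frequencies and the density conclusion follow routinely from the Parseval characterization of $H^s(\mathbb{T})$.
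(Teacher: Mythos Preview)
Your argument is correct and follows the same inductive scheme as the paper: both show $\{\sin(kx),\cos(kx):k\le N\}\subseteq\mathcal{H}_{N-1}$ by using the quadratic nonlinearity to promote frequency $N$ to $N+1$, and the key algebraic check is that the resulting Fourier coefficient at level $N+1$ does not vanish.

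The difference is only in bookkeeping. The paper works entirely with real test functions and takes $d=4$ with carefully chosen $\varphi_1,\dots,\varphi_4$ (built from $\cos x,\sin x,\cos(mx),\sin(mx)$ with parameters $\alpha,\beta$) so that the $(m-1)$-mode contributions cancel \emph{inside} the sum, leaving a pure $(m+1)$-mode output; then $\alpha=\pm\beta$ isolates sine and cosine. You instead polarize to the bilinear form $B$, compute in the complex exponential basis where the nonvanishing coefficient $j^2+jk+k^2+3>0$ is transparent, and allow the $(N-1)$-mode byproduct to remain, subtracting it off afterward because it already lies in $\mathcal{H}_{N-1}\subseteq\mathcal{H}_N$. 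Your route makes the nondegeneracy check cleaner and avoids the somewhat ad hoc choice of four test functions; the paper's route stays within the literal definition of $\mathcal{F}(G)$ with real $\varphi_i$ and never needs the polarization step. Either way the same induction closes.
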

\begin{proof}
It is clear from the definition of $\mathcal{H}_{n - 1}(n \ge 1)$ that $\mathcal{H}_0 \subset \mathcal{H}_1 \subset \text{ ... } \subset \mathcal{H}_n \subset \textit{ ... }.$ Thus the above will be proved if we can show 
\begin{equation}\label{ind_subspace}
\sin(mx), \cos(mx) \in \mathcal{H}_{m - 1}, \forall m \ge 1.   
\end{equation}
We prove by the mathematical induction. Before applying induction observe that $(1 - \partial_{xx}) \sin (mx) = (1 + m^2)\sin (mx) $ and $(1 - \partial_{xx}) \cos (mx) = (1 + m^2)\cos (mx) $ i.e for each $m \in \mathbb{N} $ the spaces $span\{\sin (mx)\}$ and $span\{\cos (mx)\}$ are invariant under $(1 - \partial_{xx})$ and $(1 - \partial_{xx})^{-1}.$ Now for $m = 1$, \eqref{ind_subspace} is obvious. For $ m = 2,$ take $ \eta = 0, \varphi = \sin (x) \in \mathcal{H}_0$ then
\begin{align*}
\eta - \varphi \partial_x \varphi - (1 - \partial_{xx})^{-1}\Big(2\varphi \partial_x \varphi + \partial_x \varphi \partial_{xx} \varphi \Big) = - \frac{3}{5} \sin(2x) \in \mathcal{H}_1
\end{align*}
and taking $\eta = 0, \varphi = \left(\sin(x) + \cos(x)\right) \in \mathcal{H}_0,$ we have
\begin{align*}
\eta - \varphi \partial_x \varphi - (1 - \partial_{xx})^{-1}\Big(2\varphi \partial_x \varphi + \partial_x \varphi \partial_{xx} \varphi \Big) = - \frac{6}{5} \cos(2x) \in \mathcal{H}_1
\end{align*}
Now assuming  $\sin(mx), \cos(mx) \in \mathcal{H}_{m - 1}$ our aim to show  $\sin((m + 1)x), \cos((m + 1)x) \in \mathcal{H}_{m}$
We set 
\begin{align*}
&\varphi_1 =  \cos (x) + \sin (x),\\
&\varphi_2 = - \cos (x) + \sin (x),\\
&\varphi_3 = \alpha \cos (mx) + \beta \sin (mx) + \cos (x) + \sin (x),\\
&\varphi_4 = -\beta \cos (mx) + \alpha \sin (mx) + \cos (x) - \sin (x).
\end{align*}
 with $\eta = 0,$ then $\varphi_1, .., \varphi_4 \in \mathcal{H}_{m - 1}$
Consider
\begin{align*}
&\partial_x\varphi_1 \partial_{xx} \varphi_1 + \partial_x \varphi_2 \partial_{xx} \varphi_2 + \partial_x \varphi_3 \partial_{xx} \varphi_3 + \partial_x \varphi_4 \partial_{xx} \varphi_4 \\
&= m \Bigg[ (m + 1) \Big\{ (\alpha - \beta) \sin ((m + 1)x) - (\alpha + \beta) \cos ((m + 1)x) \Big\}\Bigg]
\end{align*}
Similarly we get
\begin{align*}
& \varphi_1 \partial_x \varphi_1 + \varphi_2 \partial_x \varphi_2 + \varphi_3 \partial_x \varphi_3 + \varphi_4 \partial_x \varphi_4\\
& = (m + 1) \Big\{ (\alpha + \beta) \cos ((m + 1)x) - (\alpha - \beta) \sin ((m + 1)x) \Big\} 
\end{align*}
So
\begin{align*}
&\eta - \sum _{i=1}^{4} \varphi_{i}\partial_x\varphi_i - (1 - \partial_{xx})^{-1} \sum _{i=1}^{4} \Bigg(2\varphi_i\partial_x\varphi_i + \partial_x \varphi_i \partial_{xx}\varphi_i \Bigg) \\
&= - (m + 1) \Big\{ (\alpha + \beta) \cos ((m + 1)x) - (\alpha - \beta) \sin ((m + 1)x) \Big\} \\
& \quad - (1 - \partial_{xx})^{-1} \Bigg(2 (m + 1) \Big\{ (\alpha + \beta) \cos ((m + 1)x) - (\alpha - \beta) \sin ((m + 1)x) \Big\} \\
&\hspace{3cm} +m \Bigg[ (m + 1) \Big\{ (\alpha - \beta) \sin ((m + 1)x) - (\alpha + \beta) \cos ((m + 1)x) \Big\}\Bigg]  \Bigg)\\
& = - (m + 1) \Big\{ (\alpha + \beta) \cos ((m + 1)x) - (\alpha - \beta) \sin ((m + 1)x) \Big\}\\
& \quad - (1 - \partial_{xx})^{-1} \Bigg( (\alpha - \beta) \Big\{ m (m + 1) - 2(m + 1)  \Big\} \sin ((m + 1)x) \\
& \hspace{5cm}+ (\alpha + \beta) \Big\{ 2(m + 1) -m(m + 1) \Big\}  \cos ((m + 1)x) \Bigg)\\
& = - (m + 1) \Big\{ (\alpha + \beta) \cos ((m + 1)x) - (\alpha - \beta) \sin ((m + 1)x) \Big\} \\
& \quad - \frac{(\alpha - \beta) (m - 2) (m + 1)}{ 1 + (m + 1)^2} \sin ((m + 1)x) - \frac{(\alpha + \beta) (2 - m) (m + 1)}{ 1 + (m + 1)^2} \cos ((m + 1)x) \\
& = (\alpha - \beta) (m + 1) \Bigg\{ 1 - \frac{ (m - 2) }{ 1 + (m + 1)^2} \Bigg\} \sin ((m + 1)x) - (\alpha + \beta) (m + 1) \Bigg\{ 1 
 + \frac{ (2 - m) }{ 1 + (m + 1)^2} \Bigg\} \cos ((m + 1)x)
\end{align*}
Now  choosing once $\alpha = \beta $ then $\alpha = -\beta$ in our chosen $\varphi_i$'s implies $\sin(m + 1)x \text{ , } \cos(m + 1)x \in \mathcal{H}_{m}.$ Then combining the above results, we conclude that \eqref{ind_subspace} holds for $m + 1.$ Then proof is complete. 
\end{proof}

Now we define the following sets

$$\Theta(u_0, t_*) = \left\{ \eta \in L^2_{loc}( R^+; H^{s - 2}(\mathbb{T})) \Big| \text{ solution of \eqref{4} exists and continuous for}\ t \leq t_* \right\}.$$
and 
$$\widehat{\Theta}(u_0, t_*) = \left\{ (\varphi, \eta ) \in  H^{s + 1}(\mathbb{T}) \times L^2_{loc}(R^+; H^{s -2}(\mathbb{T})) \Big| \text{  solution of \eqref{4} exists in  } C([0, t_*]; H^s(\mathbb{T}))\right\}$$

But to prove Theorem \ref{main_thm} we want a finite dimensional control, i.e aim to find $\eta \in \Theta(u_0, T) \cap L^2(0, T; \mathcal{H})$.

The following proposition shows that the nonlinear term of the equation appears in the limit of the solution as $ t $ goes to 0. In other words, we can approximately reach  to the elements of $\mathcal{H}_N.$

\begin{proposition} [Asymptotic property]\label{prp_limit}
Let $ s > \frac{3}{2},$ for all $ u_0, \varphi , \eta_0 \in H^{s + 1}(\mathbb{T})$, then for \eqref{gen_equn} there exists $\delta_0 > 0$ such that $(\delta^{-\frac{1}{2}} \varphi, \delta^{-1} \eta_0) \in \widehat{\Theta}(u_0, t_*)$ for any $\delta \in (0,\delta_0)$, the following limit holds at $t = \delta$
\begin{align*}
\mathcal{R}_{\delta}(u_0, \delta^{-\frac{1}{2}}\varphi, \delta^{-1}\eta_0) \rightarrow u_0 - \varphi\varphi_x + (1 - \partial_{xx})^{-1}\left(\eta_0 - 2\varphi\varphi_x - \varphi_x\varphi_{xx}\right), \text{in}\ H^s(\mathbb{T}) \text{ as }\ \delta \rightarrow 0. 
\end{align*}
\end{proposition}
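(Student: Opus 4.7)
The plan is to rescale time by $\tau := t/\delta$ and analyze the resulting problem on the fixed interval $[0,1]$. Setting $\tilde u(\tau) := u(\delta\tau)$ and using the form \eqref{inverse_equn} of \eqref{gen_equn} with $\varphi$ replaced by $\delta^{-1/2}\varphi$ and source $\delta^{-1}(1-\partial_{xx})^{-1}\eta_0$, a direct expansion of the nonlinearity grouped by powers of $\delta^{1/2}$ gives
\begin{equation*}
\tilde u_\tau \;=\; G(\varphi,\eta_0) \;+\; \delta^{1/2}\, F_1(\tilde u,\varphi) \;+\; \delta\, F_2(\tilde u), \qquad \tilde u(0)=u_0,
\end{equation*}
where $G(\varphi,\eta_0) := -\varphi\varphi_x + (1-\partial_{xx})^{-1}\bigl(\eta_0 - 2\varphi\varphi_x - \varphi_x\varphi_{xx}\bigr)$ is exactly the target drift, $F_1$ collects the bilinear cross terms (such as $\tilde u\varphi_x$, $\varphi \tilde u_x$ and $(1-\partial_{xx})^{-1}(\tilde u\varphi_x+\cdots)$) together with $A\varphi$, and $F_2 = A\tilde u - \tilde u \tilde u_x - (1-\partial_{xx})^{-1}(2\tilde u\tilde u_x + \tilde u_x\tilde u_{xx})$ is the Camassa--Holm right-hand side for $\tilde u$ alone. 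The $\delta^{1/2}$ and $\delta$ prefactors, produced by the scaling, are precisely what makes the singular limit tractable.

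The main obstacle is to show that for all sufficiently small $\delta$, $\tilde u$ exists on the whole of $[0,1]$ with an $H^s$-norm bounded uniformly in $\delta$; in original time this is exactly the statement $T_\ast(u_0,\delta^{-1/2}\varphi,\delta^{-1}\eta_0) \geq \delta$ required for membership in $\widehat\Theta(u_0,t_\ast)$. Proposition \ref{prp_existance} alone only produces a local time which a priori shrinks to zero as the data is blown up. I would get around this by introducing the perturbation $w(\tau) := \tilde u(\tau) - u_0 - \tau\, G(\varphi,\eta_0)$, which vanishes at $\tau=0$ and satisfies an equation of the schematic form $w_\tau = \delta^{1/2}\bigl[\text{linear in } w + \text{const}\bigr] + \delta\bigl[\text{quadratic in } w + u_0 + \tau G\bigr]$. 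Applying the Kato--Ponce / Moser commutator estimates used in the $H^s$ well-posedness of Camassa--Holm in \cite{Himonas_2001, Danchin_2001}, together with the two-derivative gain of $(1-\partial_{xx})^{-1}$ and the algebra property of $H^s$ for $s > 3/2$, yields an inequality of the form
\begin{equation*}
\tfrac{d}{d\tau}\|w\|_{H^s}^2 \;\leq\; C\bigl(1+\|w\|_{H^s}\bigr)\|w\|_{H^s}^2 \;+\; C\,\delta^{1/2}\bigl(1+\|w\|_{H^s}^2\bigr),
\end{equation*}
with $C$ depending only on $\|u_0\|_{H^s}$, $\|\varphi\|_{H^{s+1}}$, $\|\eta_0\|_{H^{s+1}}$. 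A continuity/Gr\"onwall bootstrap then gives $\sup_{\tau\in[0,1]}\|w(\tau)\|_{H^s} \leq C\delta^{1/4}$ for all $\delta\in(0,\delta_0)$, simultaneously extending the existence up to $\tau = 1$ and yielding the quantitative convergence.

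With this uniform estimate the conclusion is immediate: one has $\tilde u(\tau) = u_0 + \tau\, G(\varphi,\eta_0) + r_\delta(\tau)$ with $\sup_{[0,1]}\|r_\delta\|_{H^s} \to 0$ as $\delta \to 0^+$, so evaluating at $\tau = 1$ (i.e.\ $t = \delta$) gives
\begin{equation*}
\mathcal{R}_\delta(u_0,\delta^{-1/2}\varphi,\delta^{-1}\eta_0) \;=\; u_0 + G(\varphi,\eta_0) + r_\delta(1) \;\xrightarrow[\delta\to 0^+]{H^s(\mathbb{T})}\; u_0 + G(\varphi,\eta_0),
\end{equation*}
which is exactly the asserted asymptotic identity, while the admissibility $(\delta^{-1/2}\varphi,\delta^{-1}\eta_0)\in \widehat\Theta(u_0,t_\ast)$ is a by-product of the same uniform existence argument.
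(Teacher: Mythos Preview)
Your proposal is correct and follows essentially the same route as the paper's proof: the paper performs the identical time rescaling $v(t)=u(\delta t)$, subtracts the same linear-in-$\tau$ ansatz $u_0+\tau G(\varphi,\eta_0)$ to obtain $z$ (your $w$), derives an $H^s$ energy inequality via the Kato--Ponce commutator estimate and the algebra property of $H^{s-1}$, and closes with an ODE comparison (the paper writes it as $\tfrac{d}{dt}\|z\|_{H^s}^2\le C\delta^{1/2}(1+\|z\|_{H^s}^2+\|z\|_{H^s}^3)$ and integrates to $\|z(1)\|_{H^s}^2\le\tan(3C\delta^{1/2})$). The only cosmetic difference is that the paper keeps the $\delta^{1/2}$ prefactor on \emph{all} terms of the differential inequality (using $\delta\le\delta^{1/2}$), whereas your stated inequality drops it on the cubic term; either form suffices for the bootstrap since $w(0)=0$.
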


Since the space  $\mathcal{H}_{\infty} := \bigcup\limits_{n\in\mathbb{N}} \mathcal{H}_{n - 1}$  is dense in $H^s(\mathbb{T}),$  we can deduce from the previous Propositions that  for all $$z = a_0 + \sum_{k=1}^{\infty}a_k cos(kx) + b_k sin(kx) \in H^s(\mathbb{T}),$$ we can find $N ( \varepsilon)$ large enough such that  $\left( a_0 + \sum\limits_{k=1}^{N(\varepsilon)}a_k cos(kx) + b_k sin(kx)\right) \in \mathcal{H}_N $ and $$\left\| z - a_0 + \sum\limits_{k=1}^{N(\varepsilon)}a_k cos(kx) + b_k sin(kx) \right\|_{H^s} < \varepsilon.$$ 

Now the elements of space $\mathcal{H}_n$ formed by trigonometric polynomials are in fact elements of space $\mathcal{F}(\mathcal{H}_n)$ that we have already reach according to proposition \ref{prp_limit}.

\begin{remark}
 As a consequence of Proposition\ref{prp_saturating} and Proposition \ref{prp_limit}, we get approximate controllability by a control in $L^2(0, T; \mathcal{H}_N),$ for some large $N.$ Then an immediate question is : What should be the optimal finite dimensional subspace $\mathcal{H}$ of $L^2(\mathbb{T})$ for which the above holds? Novelty of the Theorem \ref{main_thm} is answering this question by constructing a control in $L^2(0, T; \mathcal{H}),$  where $$\mathcal{H} = \text{span}\{1, \sin(x), \cos(x)  \}.$$
\end{remark}

We finish this section by giving an algebraic property of $\mathcal{R}_{t}$ :
\begin{lemma}\label{lem_rt}
Let $\mathcal{R}_t(u_0, 0, \eta)$ be the solution of \eqref{4}, where $\eta$ is given by $$\eta(s) = \begin{cases}
\eta_1(s), s \in [0, t_1] \\ \eta_2(s), s \in [t_1, t_2]\\\eta_3(s), s \in [t_2, t_3]
\end{cases}$$ For all $t_1, t_2, t_3 \ge 0,$ we have the equality $$ \mathcal{R}_{t_1 +t_2 +t_3}(u_0,0, \eta) = \mathcal{R}_{t_3}(\mathcal{R}_{t_2}(\mathcal{R}_{t_1}(u_0, 0, \eta_1(\cdot)), 0, \eta_2(\cdot -t_1)),0, \eta_3(\cdot -t_2-t_1)).$$
\end{lemma}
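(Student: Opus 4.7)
The proof proceeds entirely by the uniqueness assertion in Proposition \ref{prp_existance} combined with the fact that the differential operator in equation \eqref{4} (with $\varphi=0$) has no explicit time dependence. That autonomy is what allows a time-shifted solution to remain a solution of the same PDE, driven by the correspondingly shifted forcing.

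The plan is to construct a candidate solution $W$ on the full interval $[0,t_1+t_2+t_3]$ by concatenating three segments. On $[0,t_1]$ set $W(t):=\mathcal{R}_t(u_0,0,\eta_1)$. Let $u_1^{\star}:=W(t_1)$ and on $[t_1,t_1+t_2]$ put
\begin{equation*}
W(t):=\mathcal{R}_{t-t_1}\bigl(u_1^{\star},\,0,\,\eta_2(\cdot-t_1)\bigr);
\end{equation*}
the shift $(\cdot-t_1)$ inside the control is chosen precisely so that, after undoing the time shift, the effective forcing on $[t_1,t_1+t_2]$ coincides with $\eta$ there. An analogous third piece defines $W$ on $[t_1+t_2,t_1+t_2+t_3]$ starting from $u_2^{\star}:=W(t_1+t_2)$ with forcing $\eta_3(\cdot-t_1-t_2)$. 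By construction the three pieces match at the junction points, so $W\in C([0,t_1+t_2+t_3];H^s(\mathbb{T}))$, and on each open subinterval $W$ solves \eqref{4} with forcing $\eta$.

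By the uniqueness statement in Proposition \ref{prp_existance}, the $C([0,t_1+t_2+t_3];H^s)$ solution driven by the data $(u_0,0,\eta)$ is unique, so $W\equiv \mathcal{R}_{\cdot}(u_0,0,\eta)$. Evaluating at the terminal time $t_1+t_2+t_3$ gives exactly
\begin{equation*}
\mathcal{R}_{t_1+t_2+t_3}(u_0,0,\eta) \;=\; \mathcal{R}_{t_3}\bigl(u_2^{\star},0,\eta_3(\cdot-t_1-t_2)\bigr),
\end{equation*}
which, after unwinding the definitions of $u_2^{\star}$ and $u_1^{\star}$, is the claimed identity.

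The only mild subtlety I anticipate — and the main place to be careful rather than a genuine obstacle — is that Proposition \ref{prp_existance} provides existence only up to some maximal time $T_{\star}$, so one must ensure that the three pieces $U_1,U_2,U_3$ actually exist up to the prescribed lengths $t_1,t_2,t_3$. This is automatic from the standing hypothesis that the left-hand side $\mathcal{R}_{t_1+t_2+t_3}(u_0,0,\eta)$ is itself defined: its restriction to each subinterval provides an $H^s$-valued solution on that subinterval, and local uniqueness forces those restrictions to coincide with $U_1,U_2,U_3$. Once this bookkeeping is recorded, the lemma reduces to one application of uniqueness.
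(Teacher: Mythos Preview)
Your argument is correct and is essentially the paper's own proof: the paper introduces a two-parameter solution operator $\widehat{\mathcal{R}}(t,s,v,\eta)$ and invokes the flow identity $\widehat{\mathcal{R}}(t,\sigma,\widehat{\mathcal{R}}(\sigma,s,u_0,\eta),\eta)=\widehat{\mathcal{R}}(t,s,u_0,\eta)$ from uniqueness, then uses time-autonomy to convert $\widehat{\mathcal{R}}(t+\tau,\tau,\cdot,\cdot)$ into $\mathcal{R}_t(\cdot,0,\cdot)$, which is exactly your concatenation-plus-uniqueness argument in different notation. Your explicit remark about why autonomy is needed and your care about the existence interval are, if anything, more complete than the paper's presentation.
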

\begin{proof}
We denote by $\widehat{\mathcal{R}}(t, s, v, \eta)$ the solution of \eqref{4} at the instant $t,$ when $\varphi = 0$ and with initial data $\widehat{\mathcal{R}}(s, s, v, \eta) = v.$ That means $\mathcal{R}_{t}(u_0, 0, \eta ) = \widehat{\mathcal{R}}(t, 0, u_0, \eta).$ From the uniqueness of the solutions, we can see that for all $\sigma \ge 0$ 
\begin{align}\label{uni_R}
\widehat{\mathcal{R}}(t, \sigma, \widehat{\mathcal{R}}(\sigma, s, u_0, \eta)) = \widehat{\mathcal{R}}(t, s, u_0, \eta)
\end{align} Using\eqref{uni_R} we can write
\begin{align*}
&\widehat{\mathcal{R}}(t_1 + t_2 + t_3, 0, u_0, \eta)\\
& = \widehat{\mathcal{R}}(t_1 + t_2 + t_3, t_1 + t_2,\widehat{\mathcal{R}}(t_1 + t_2, t_1, \widehat{\mathcal{R}}(t_1, 0,  u_0, \eta_1(\cdot)), \eta_2(\cdot - t_1)), \eta_3(\cdot - t_2 -t_1))\\
& = \widehat{\mathcal{R}}(t_3, 0,\widehat{\mathcal{R}}(t_1 + t_2, t_1, \widehat{\mathcal{R}}(t_1, 0,  u_0, \eta_1(\cdot)), \eta_2(\cdot - t_1)), \eta_3(\cdot - t_2 -t_1)) \\
& = \mathcal{R}_{t_3}(\widehat{\mathcal{R}}(t_1 + t_2, t_1, \widehat{\mathcal{R}}(t_1, 0,  u_0, \eta_1(\cdot)), \eta_2(\cdot - t_1)), \eta_3(\cdot - t_2 -t_1)) \\
& = \mathcal{R}_{t_3}(\mathcal{R}_{t_2}(\mathcal{R}_{t_1}(u_0, \eta_1(\cdot)), \eta_2(\cdot - t_1)), \eta_3(\cdot - t_2 -t_1)) 
\end{align*}
\end{proof}

Assuming the Proposition \ref{prp_existance} - \ref{prp_limit} , let us prove Theorem \ref{main_thm}.

\section{Proof of Theorem \ref{main_thm}}
As we have seen the equation is wellposed for $s > \frac{3}{2}$ then through out this section the we will consider $H^s(\mathbb{T})$ for $s > \frac{3}{2},$ what we have discussed in the Introduction, the idea is to establish approximate controllability in small time to the points of the affine space $u_0 + \mathcal{H}_N$ by combining Proposition \ref{prp_limit} and an induction argument in $N.$ Then induction hypothesis as follows
\begin{align*}
\forall u_0 \in H^s(\mathbb{T})\text{ , } \forall N \in \mathbb{N}\text{ , } \forall w \in \mathcal{H}_N \text{ , } \forall \sigma > 0 \text{ , } \exists t \in [0, \sigma],\\ \exists \widehat\eta \in \Theta(u_0, t) \cap L^2(0, t; \mathcal{H}_0) \text{ such that } \|\mathcal{R}_t(u_0, 0, \widehat\eta )  - (u_0 + w)\|_s < \varepsilon    
\end{align*}
 Then the saturation property will imply approximate controllability in small time to any point of $H^s$. Finally, controllability in any time T is proved by steering the system close to the target $u_1$ in small time, then forcing it to remain close to $u_1$ for a sufficiently long time. The accurate proof is divided into four steps.

\vspace{2.5mm}
\textbf{Step 1: Controllability in small time to $u_0 + \mathcal{H}_0$.} Let us assume for the moment that $u_0 \in H^{s+1}$. First we prove that problem \eqref{ctrl_equn} is approximately controllable to the set $u_0+\mathcal{H}_0$ in small time. More precisely, we show that, for any $ \eta \in \mathcal{H}_0, \varepsilon > 0,$ there exists a small time $t > 0$ and a control $\hat{\eta} \in \Theta(u_0, t) \cap L^2(0, t; \mathcal{H}_0) $ such that $$\|\mathcal{R}_t(u_{0}, 0,\hat{\eta}) - (u_{0} + \eta)\|_{s} < \epsilon.$$ Indeed, applying Proposition \ref{prp_limit} for the couple $(\eta, 0)$, we see that $$\mathcal{R}_{\delta}(u_{0}, 0, \delta^{-1}\eta) \to u_{0} + \eta\ \hspace{.5cm} \text{in}\  H^s(\mathbb{T})\ \hspace{0.5cm} \text{as}\ \delta \to 0.$$ Which gives the required result with $\hat{\eta} = t^{-1}(1 - \partial_{xx})\eta $ and $t = \delta.$

\vspace{2.5mm}
\textbf{Step 2. Controllability in small time to $u_{0} + \mathcal{H}_{N}$.} After getting approximate controllability to $u_0 + \mathcal{H}_0$ how can we reach very close to $u_0 + \mathcal{H}_1$ we have discussed it explicitly for a simple case (See Appendix), for the time being to prove the general case, we will use induction, Assume that approximate controllability of  the control problem \eqref{ctrl_equn} to the set $u_0 + \mathcal{H}_{N-1}$ is already proved. Let $\Tilde{\eta} \in \mathcal{H}_N$ be of the form $$\Tilde{\eta} = \eta - \sum _{i=1}^{m} \varphi_{i}\partial_x\varphi_i - (1 - \partial_{xx})^{-1} \sum _{i=1}^{m} \big(2\varphi_i\partial_x\varphi_i + \partial_x \varphi_i \partial_{xx}\varphi_i \big) $$ for some $m \ge 1,$ and the vectors $\eta, \varphi_1, ...... , \varphi_m \in \mathcal{H}_{N-1}.$ Applying the Proposition \ref{prp_limit}, we see that there exists $\theta_1 > 0,$ and control $\eta_1 \in \Theta(u_0, \theta_1) \cap L^2(0, \theta_1; \mathcal{H}_0)$ such that 
\begin{align}\label{ineq_1}
\|\mathcal{R}_{\theta_1}(u_0, 0, \eta_1) - (u_0 + \theta_1^{-\frac{1}{2}} \varphi_1) \|_s < \frac{\varepsilon}{2}.
\end{align}
By the uniqueness of the solution of the Cauchy problem, the following equality holds $$\mathcal{R}_t(u_0 + \delta^{-\frac{1}{2}}v, 0, \delta^{-1}w) = \mathcal{R}_t(u_0, \delta^{-\frac{1}{2}}v, \delta^{-1}w) + \delta^{-\frac{1}{2}}v \text{ ,  for all } t \in [0, t_*(\delta)]$$
Combining this with the fact that $\eta, \varphi_1 \in \mathcal{H}_{N -1},$ induction hypothesis and Proposition \ref{prp_limit}, we can find a small time $\theta_2 > 0,$ and $\eta_2  \in \Theta(u_0, \theta_2) \cap L^2(0, \theta_2 ; \mathcal{H}_0) $ such that
\begin{align}\label{ineq_2}
\|\mathcal{R}_{\theta_2}(u_0 + \theta_1^{-\frac{1}{2}} \varphi_1, 0, \eta_2) - (u_0 + \eta - \varphi_1 \partial_x \varphi_1 - (1 - \partial_{xx})^{-1}\big(2\varphi_1\partial_x\varphi_1 + \partial_x \varphi_1 \partial_{xx}\varphi_1 \big))\|_s < \frac{\varepsilon}{2}.
\end{align}
Now define the control $\widehat\eta_1 : s \to \mathbbm{1}_{[0, \theta_1]} \eta_1 + \mathbbm{1}_{[\theta_1 ,  \theta_1 + \theta_2]} \eta_2 $ and using the Lemma \ref{lem_rt} and Equation \eqref{ineq_1}  \eqref{ineq_2} we have 
\begin{align}\label{ineq_3}
&\|\mathcal{R}_{\theta_1 + \theta_2}(u_0, 0, \widehat\eta_1) - (u_0 + \eta - \varphi_1 \partial_x \varphi_1 - (1 - \partial_{xx})^{-1}\big(2\varphi_1\partial_x\varphi_1 + \partial_x \varphi_1 \partial_{xx}\varphi_1 \big))  \|_s \notag \\
&\le \|\mathcal{R}_{\theta_2}(\mathcal{R}_{\theta_1}(u_0, 0, \mathbbm{1}_{[0, \theta_1]} \eta_1), 0, \eta_2) -  \mathcal{R}_{\theta_2}(u_0 + \theta_1^{-\frac{1}{2}} \varphi_1, 0, \eta_2)\|_s \notag \\
& \hspace{1.5cm}+ \| \mathcal{R}_{\theta_2}(u_0 + \theta_1^{-\frac{1}{2}} \varphi_1, 0, \eta_2) - (u_0 + \eta - \varphi_1 \partial_x \varphi_1 - (1 - \partial_{xx})^{-1}\big(2\varphi_1\partial_x\varphi_1 + \partial_x \varphi_1 \partial_{xx}\varphi_1 \big)) \|_s < \varepsilon.
\end{align}
Following the method above with minor changes, for initial data $\widehat u_0 = u_0 + \eta - \varphi_1 \partial_x \varphi_1 - (1 - \partial_{xx})^{-1}\big(2\varphi_1\partial_x\varphi_1 + \partial_x \varphi_1 \partial_{xx}\varphi_1 \big) \in H^{s +1}(\mathbb{T}),$ there exists a small time $\theta_3 > 0$ and a control $\eta_3 \in \Theta(u_0, \theta_3) \cap L^2(0, \theta_3; \mathcal{H}_0)$ such that
\begin{align}\label{ineq_4}
\|\mathcal{R}_{\theta_3}(\widehat u_0, 0,  \eta_3) - (\widehat u_0 - \varphi_2 \partial_x \varphi_2 - (1 - \partial_{xx})^{-1}\big(2\varphi_2\partial_x\varphi_2 + \partial_x \varphi_2 \partial_{xx}\varphi_2 \big))\|_s < \varepsilon.
\end{align}
This means starting form $u_0 + \eta - \varphi_1 \partial_x \varphi_1 - (1 - \partial_{xx})^{-1}\big(2\varphi_1\partial_x\varphi_1 + \partial_x \varphi_1 \partial_{xx}\varphi_1 \big) ,$ we can attain approximately $u_0 + \eta - \varphi_1 \partial_x \varphi_1 - (1 - \partial_{xx})^{-1}\big(2\varphi_1\partial_x\varphi_1 + \partial_x \varphi_1 \partial_{xx}\varphi_1 \big) - \varphi_2 \partial_x \varphi_2 - (1 - \partial_{xx})^{-1}\big(2\varphi_2\partial_x\varphi_2 + \partial_x \varphi_2 \partial_{xx}\varphi_2 \big) .$ Now taking $\widehat\eta_2: s \to \mathbbm{1}_{[0, \theta_1 + \theta_2]} \widehat\eta_1 + \mathbbm{1}_{[\theta_1+\theta_2, \theta_1 + \theta_2 + \theta_3]}\eta_3$ as a control and combining Lemma \ref{lem_rt} and Equation \eqref{ineq_3}, \eqref{ineq_4} we have
\begin{align}\label{ineq_5}
\|\mathcal{R}_{\theta_1 + \theta_2 + \theta_3}(u_0, 0, \widehat\eta_2) - (u_0 + \eta - \sum _{i=1}^{2} \varphi_{i}\partial_x\varphi_i - (1 - \partial_{xx})^{-1} \sum _{i=1}^{2} \big(2\varphi_i\partial_x\varphi_i + \partial_x \varphi_i \partial_{xx}\varphi_i \big))\|_s   < \varepsilon.  
\end{align}
Choose $\theta_1 , \theta_2, \theta_3$ such that $\theta_1 + \theta_2 + \theta_3 < \sigma.$

Iterating the argument, we construct a small time $ \theta > 0,$ and a control $\widehat\eta \in L^2(0, \theta; \mathcal{H}_0)$ satisfying 
\begin{align}\label{ineq_6}
\|\mathcal{R}_{\theta}(u_0, 0, \widehat\eta) - (u_0 + \eta - \sum _{i=1}^{m} \varphi_{i}\partial_x\varphi_i - (1 - \partial_{xx})^{-1} \sum _{i=1}^{m} \big(2\varphi_i\partial_x\varphi_i + \partial_x \varphi_i \partial_{xx}\varphi_i \big))\|_s  \notag \\
= \|\mathcal{R}_{\theta}(u_0, 0, \widehat\eta) - (u_0 + \Tilde{\eta})\|_s < \varepsilon.
\end{align}
This proves approximate controllability in small time to any point in $u_0 + \mathcal{H}_N.$

\vspace{2.5mm}
\textbf{Step 3. Global controllability in small time.} Now let $u_{1} \in H^s(\mathbb{T})$ be arbitrary. As $\mathcal{H}_{\infty}$ dense in $H^s(\mathbb{T})$, there is an integer $N \ge 1$ and point $\hat{u}_{1} \in u_{0} + \mathcal{H}_N$ such that 
\begin{equation}\label{ineq_7}
\|u_{1} - \hat{u}_{1}\|_{s} < \frac{\varepsilon}{2}
\end{equation}
By the results of steps $1$ and $2$, for any $\varepsilon > 0$, there is a time $\theta > 0$ and a control $\hat{\eta} \in L^2(0,T;\mathcal{H})$ satisfying $$\|\mathcal{R}_{\theta}(u_{0}, 0, \hat{\eta}) - \hat{u}_{1}\|_{s} < \frac{\varepsilon}{2}.$$
Combining this with \eqref{ineq_7}, we get approximate controllability in small time to $u_{1}$ from $u_{0} \in H^{s+1}(\mathbb{T}).$ Since the space $H^{s + 1}(\mathbb{T})$ is dense in $H^s(\mathbb{T})$ and proposition \ref{prp_cntity} we conclude small time approximate controllability starting from arbitrary $u_{0} \in H^s(\mathbb{T})$.

\vspace{2.5mm}
\textbf{Step 4. Global approximate Controllability in fixed time $T$.} Since we have goal controllability in small time, to complete the proof of the theorem, it suffices to show that, for any $\varepsilon, T > 0$ and any $u_1 \in H^s,$ there is a control $ \eta \in \Theta(u_1, T) \cap L^2(0, T; \mathcal{H}_0)$ such that 
\begin{align}\label{ineq_8}
\|\mathcal{R}_{T}(u_1, 0, \eta) - u_1\|_s < \varepsilon.
\end{align}
Note that here the initial condition and the target coincide with $u_1.$It is not clear, whether it is possible or not to find a control taking values in $ \mathcal{H}_0$ such that the solution starting from $ u_1$ remains close to that point on all the time interval $[0, T]$ . However, we will see it is possible . 

So applying the result of step $3,$ for any $\varepsilon > 0$, there is a time $T_{1} > 0$ and a control $\widehat\eta^1 \in L^2(0,T_1;\mathcal{H}_0)$ satisfying $$\|\mathcal{R}_{T_{1}}(u_{0}, 0, \widehat\eta^1) - u_{1}\|_{s} < \frac{\varepsilon}{2}.$$

Take $v_{1} = \mathcal{R}_{T_{1}}(u_{0}, 0, \widehat\eta^1)$. According to Proposition \ref{prp_cntity}, we can find $\tau > 0$ such that for $t \in [0,\tau]$,$$\|\mathcal{R}_{t}(v_{1}, 0, 0)- v_{1}\|_{s} < \frac{\varepsilon}{2}.$$ Define a control function $$\overline{\eta}_{1}(t)= \begin{cases}
     \widehat\eta^1(t)\hspace{1cm} t \in [0,T_{1}]\\0 \hspace{1.5cm} t \in (T_{1}, T_{1}+\tau],
 \end{cases}$$
then, it follows that $$\|\mathcal{R}_{T_{1} +t}(u_{0}, 0,  \overline{\eta}_{1}) - u_{1}\|_{s} < \epsilon,\hspace{1cm} \forall t \in [0, \tau].$$
If $T_{1} + \tau \ge T$, then the proof is complete. Otherwise , take $v_{2} = \mathcal{R}_{T_{1} + \tau}(u_{0}, 0, \overline{\eta}_{1})$, by the result of step $3,$ there is a time $T_{2} > 0$ and a control $\widehat\eta^{2} \in L^2(0, T_{2};\mathcal{H}_0)$ satisfying $$\|\mathcal{R}_{T_{2}}(v_{2}, 0,  \widehat \eta^{2}) - u_{1}\|_{s} < \frac{\epsilon}{2}.$$
 Take $v_{3} = \mathcal{R}_{T_{2}}(v_{2}, 0,  \widehat\eta^{2})$. According to Proposition \ref{prp_existance},  for the same $\tau $, if $t \in [0,\tau]$, we have $$\|\mathcal{R}_{t}(v_{3}, 0, 0) - v_{3}\|_{s} < \frac{\varepsilon}{2}.$$ 
Define a control function $$\overline{\eta}_{2}(t)= \begin{cases}
     \overline{\eta}_{1}(t)\hspace{1cm} t \in [0,T_{1} + \tau]\\ \widehat\eta^{2}(t)\hspace{1cm} t \in (T_{1} + \tau,T_{1} + T_{2} + \tau]\\0 \hspace{1.5cm} t \in (T_{1} + T_{2} + \tau, T_{1} + T_{2} + 2\tau],
 \end{cases}$$

Then by the lemma \ref{lem_rt}, we have $$\|\mathcal{R}_{T_{1}+T_{2}+\tau+t}(u_{0}, 0, \overline{\eta}_{2}) - u_{1}\|_{s} < \varepsilon,\hspace{1cm} \forall t \in [0, \tau].$$

Again if $T_{1} + T_{2} + 2\tau \ge T$, then the proof is complete. Other wise, we apply small time controllability property to return to the ball $B_{H^s}(u_1, r)$ for any numbers $r \in (0, \frac{\varepsilon}{2}),$ after a finite number (less than the integer part of $\frac{T}{\tau+1})$ of iterations, we complete the proof of Theorem \ref{main_thm}.

\section{Proof of the Propositions}
First we introduce some notations, as we have defined $\Lambda^s := (1 - \partial_{xx})^{\frac{s}{2}}$, the pseudo-differential operator $\Lambda^s $ is defined for any $s \in \mathbb{R}$ on a test function $f$ by $$\widehat{\Lambda^s f}(\xi) = (1 + \xi^2)^{\frac{s}{2}} \hat{f}(\xi), $$ where $\hat{f}$ denotes the Fourier transformation of a function $f$ on the circle $\mathbb{T} = \mathbb{R}/2\pi \mathbb{Z}$, for $\xi \in \mathbb{Z}$ $$\hat{f}(\xi) = \int_{\mathbb{T}} e^{-i\xi x} f(x) dx . $$ 
Also, we recall that for any $s \in \mathbb{R}$ the sobolev space $H^s = H^s(\mathbb{T})$ is defined by $$H^s(\mathbb{T}) = \left\{ f \in \mathcal{D}'(\mathbb{T}) : \|f\|_{H^s} = \|\Lambda^s f\|_{L^2} \simeq \left(\sum_{\xi \in \mathbb{Z}} \left(1 + \xi^2\right)^{s} |\hat{f}(\xi)|^2\right)^{\frac{1}{2}} < \infty \right\}.$$
Furthermore, the map $\Lambda^s : H^r \to H^{r - s}$ has the operator norm
\begin{align}\label{lambda_ineq}
\|\Lambda^s\|_{\mathcal{L}(H^r, H^{r-s})} = 1 \iff \|\Lambda^s f\|_{H^{r-s}} \le \|f\|_{H^r}   \textit{ ,  } \forall f \in H^r.
\end{align}
As an operator between Sobolev spaces, we will use the fact that $\partial_x : H^r \to H^{r - 1}$ satisfies 
\begin{align}\label{dx_ineq}
\|\partial_x\|_{\mathcal{L}(H^r, H^{r-1})} = 1 \iff \|\partial_x f\|_{H^{r - 1}} \le \|f\|_{H^r} \textit{  ,  } \forall f \in H^r.
\end{align}
We adopt the notation $P \lesssim Q$ for the positive quantities $P$ and $ Q $ if there exists a constant $c > 0 $ such that $P \le c Q.$

Next, we collect some properties of the pseudo-differential operator $\Lambda^s$ and the $H^s$ space which will be used.
\begin{lemma}\label{lemma_commutator} 
As defined $H^s$ and $\Lambda^s$ for $s > 0,$ we have the followings
\begin{enumerate}
\item $ H^s $ forms an algebra for $s > \frac{1}{2}$, so the following holds
\begin{align}\label{alg_Hs}
\|fg\|_{H^s} \lesssim \|f\|_{H^s} \|g\|_{H^s} \textit{ , } \hspace{1cm} \forall f , g \in H^s.
\end{align}
\item If $s > 0$ then there is $c_s > 0,$ such that
\begin{align}\label{Kato_Ponce_estimate}
\|[\Lambda^s , f] g\|_{L^2} \le c_s \Big( \|\Lambda^s f\|_{L^2} \|g\|_{L^{\infty}}  + \|\partial_x f\|_{L^{\infty}} \|\Lambda^{ s - 1} g\|_{L^2} \Big) .   
\end{align}
where $[\Lambda^s , f] = \Lambda^s f - f \Lambda^s$ is the commutator, in which $ f $ is regarded as a multiplication operator and $ [\Lambda^s , f] g= \Lambda^s(fg) - f \Lambda^s(g).  $
\end{enumerate}
\end{lemma}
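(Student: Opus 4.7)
Both parts are classical and I would establish them via Fourier analysis on the torus, in harmony with the definition of $H^s$ given in the paper.

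For Part (1), the product $fg$ has Fourier coefficients
\begin{equation*}
\widehat{fg}(\xi) = \sum_{\eta \in \mathbb{Z}} \hat{f}(\xi - \eta)\, \hat{g}(\eta),
\end{equation*}
so the goal is to bound $\|fg\|_{H^s}^2 = \sum_{\xi}(1+\xi^2)^{s}|\widehat{fg}(\xi)|^2$ by $\|f\|_{H^s}^{2}\|g\|_{H^s}^{2}$. The first step is the Peetre-type inequality
\begin{equation*}
(1+\xi^{2})^{s/2} \lesssim (1+(\xi-\eta)^{2})^{s/2} + (1+\eta^{2})^{s/2},
\end{equation*}
which follows from $(a+b)^{s/2}\leq 2^{s/2}(a^{s/2}+b^{s/2})$ applied after writing $1+\xi^{2}\lesssim (1+(\xi-\eta)^{2})+(1+\eta^{2})$. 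This splits the convolution into two symmetric pieces. On each piece I apply Cauchy--Schwarz in $\eta$, and the condition $s>\tfrac12$ enters precisely because it guarantees $\sum_{\eta}(1+\eta^{2})^{-s}<\infty$, leaving a double sum that is bounded by $\|f\|_{H^s}^{2}\|g\|_{H^s}^{2}$ after Fubini and Parseval.

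For Part (2), the Kato--Ponce commutator estimate, I would work on the Fourier side using
\begin{equation*}
\widehat{[\Lambda^{s},f]g}(\xi) = \sum_{\eta\in\mathbb{Z}}\Bigl[(1+\xi^{2})^{s/2}-(1+\eta^{2})^{s/2}\Bigr]\hat{f}(\xi-\eta)\,\hat{g}(\eta),
\end{equation*}
and perform a paraproduct-style decomposition of the symbol difference. In the regime $|\xi-\eta|\ll|\eta|$ (low-frequency $f$, high-frequency $g$), the mean value theorem yields
\begin{equation*}
\Bigl|(1+\xi^{2})^{s/2}-(1+\eta^{2})^{s/2}\Bigr| \lesssim |\xi-\eta|\,(1+\eta^{2})^{(s-1)/2},
\end{equation*}
and the extra factor $|\xi-\eta|$ on the Fourier side corresponds to a derivative falling on $f$; bounding $\partial_{x}f$ in $L^{\infty}$ and pairing $(1+\eta^{2})^{(s-1)/2}\hat{g}(\eta)$ in $\ell^{2}$ produces the term $\|\partial_{x}f\|_{L^{\infty}}\|\Lambda^{s-1}g\|_{L^{2}}$. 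In the complementary regime $|\xi-\eta|\gtrsim|\eta|$, I would absorb $(1+\xi^{2})^{s/2}$ into $(1+(\xi-\eta)^{2})^{s/2}$ (so a derivative of order $s$ falls on $f$) and pair $\hat{g}(\eta)$ against an $\ell^{1}$ norm via $\|g\|_{L^{\infty}}\lesssim \sum_{\eta}|\hat{g}(\eta)|$, which yields $\|\Lambda^{s}f\|_{L^{2}}\|g\|_{L^{\infty}}$ after a Young/Cauchy--Schwarz step.

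The main obstacle is the sharp treatment of the symbol difference in Part (2) across the dyadic boundary between the two frequency regimes; a cleaner route than direct frequency surgery is a Littlewood--Paley decomposition $f=\sum_{j}\Delta_{j}f$, $g=\sum_{k}\Delta_{k}g$, estimating $[\Lambda^{s},\Delta_{j}f]\Delta_{k}g$ separately in the cases $j\ll k$, $j\sim k$, $j\gg k$, and summing. Since both assertions are well-documented classical facts (the algebra property is standard, and \eqref{Kato_Ponce_estimate} is the original Kato--Ponce commutator estimate), the simplest option in the paper is to state them as recalled results and cite Kato--Ponce or the Himonas--Misiolek reference already present in the introduction, noting that the proofs transfer verbatim from $\mathbb{R}$ to $\mathbb{T}$.
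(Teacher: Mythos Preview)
Your proposal is correct in spirit and in fact goes well beyond what the paper does: the paper gives no proof of this lemma at all, merely writing ``For the details proof of the above lemma see (appendix \cite{Kato_Ponce_1988}).'' So you have anticipated the paper's treatment exactly in your final sentence, where you note that the simplest option is to cite Kato--Ponce.

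Your sketch of Part~(1) is the standard argument and is fine. Your sketch of Part~(2) is along the right lines, though one small slip: in the high--low regime you write $\|g\|_{L^{\infty}}\lesssim \sum_{\eta}|\hat g(\eta)|$ and then ``pair $\hat g(\eta)$ against an $\ell^{1}$ norm''; but the inequality goes the wrong way for that purpose, since you need to \emph{bound by} $\|g\|_{L^\infty}$, not bound $\|g\|_{L^\infty}$ from above. In practice this piece is handled by a physical-space estimate (e.g.\ via Littlewood--Paley blocks and Bernstein, exactly as you suggest in your closing remarks), not by $\ell^1$ control of $\hat g$. Since you already flag the Littlewood--Paley route as the cleaner one and since the paper simply cites the result, this is a cosmetic issue rather than a genuine gap.
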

For the details proof of the above lemma see (appendix \cite{Kato_Ponce_1988}).

Now we can prove Proposition \ref{prp_existance}.

\vspace{5mm}
\subsection{Proof of Proposition \ref{prp_existance}} For given $u_0  \in H^s$ and $ \varphi, f$ in some suitable space, which will be decided later. we can write \eqref{inverse_equn} as
\begin{equation}\label{inv_eqn_main}
\begin{cases}
u_t = -(u+\varphi)\partial_x(u+\varphi)-F(u)\\
u(0,x)=u_0(x).
\end{cases}
\end{equation}
where
\begin{align}\label{exp_F(u)}
F(u) = \Lambda^{-2}\Big[ \partial_x(u+\varphi) + 2(u+\varphi)\partial_x (u+\varphi) + \partial_x(u+\varphi)\partial_{xx}(u+\varphi) + f\Big] 
\end{align}
To show the existences of \eqref{inv_eqn_main} we will treat the equation like an IVP in the Banach Space $H^s.$ But there is a problem regarding the term $u u_x.$ So using a Friedrichs mollifier $J_{\varepsilon},$ we obtain the following mollified version of the  Cauchy problem \eqref{inv_eqn_main} $u_t = - [(J_{\varepsilon}u) (J_{\varepsilon}u_x) + u\varphi_x + \varphi u_x + \varphi \varphi_x] - F(u), u(0, x) = u_0(x),$ which is genuine ODE problem in $H^s$ and which can be solved using the abstract ODE result (See Theorem $7.3$ Chapter $7$,\cite{Functional_Brezis_book}). Using energy estimates, it is shown that this solution is unique. For this part we will follow \cite{Himonas_2014}.

\textbf{The Mollified i.v.p.} Next , we study the following mollified version of problem  of \eqref{inv_eqn_main}
\begin{align}\label{mollified_ode}
\begin{cases}
u_t = - [(J_{\varepsilon} u) (J_{\varepsilon} u_x)  + u\varphi_x + \varphi u_x + \varphi \varphi_x] - F(u) \\
u(0, x) = u_0(x) 
\end{cases}
\end{align}
where for each $\varepsilon \in (0, 1]$ the operator $J_{\varepsilon}$ is the Friedrichs mollifier, defined by 
\begin{align}
J_{\varepsilon} f : = j_{\varepsilon} * f .
\end{align}
 To define $j_{\varepsilon},$ we fix a Schwartz function $j(x) \in \mathcal{S}(\mathbb{R})$ satisfying $ 0 \le \widehat{j}(\xi) \le 1$ for all $\xi \in \mathbb{R}$ and $\widehat{j}(\xi) = 1$ for $|\xi | \le 1.$ We may then define the periodic function $j_{\varepsilon}$ by
 \begin{align}
j_{\varepsilon}(x) := \frac{1}{2\pi} \sum_{n \in \mathbb{Z}} \widehat{j}(\varepsilon n) e^{i n x}.
\end{align}
From the construction of mollifier  $j_{\varepsilon},$ we have 
\begin{align}\label{commutative}
\Lambda^s J_{\varepsilon} = J_{\varepsilon} \Lambda^s 
\end{align} and
\begin{align}\label{self_adj}
\langle J_{\varepsilon} f , g  \rangle_{L^2} = \langle J_{\varepsilon} g , f  \rangle_{L^2}
\end{align}
We now consider the map $G_{\varepsilon} : H^s \to H^s,$ given by
\begin{align}\label{G_epsilon_map}
  G_{\varepsilon}(u) =  - [(J_{\varepsilon} u) (J_{\varepsilon} u_x) + u\varphi_x + \varphi u_x + \varphi \varphi_x] - F(u).
\end{align}
Each map $G_{\varepsilon} $ is continuously differentiable.
Consider 
\begin{align}\label{engy_est}
&\frac{d}{dt}  \|J_{\epsilon} u\|_{H^s}^2  =  \frac{d}{dt}\langle \Lambda^s J_{\epsilon}u , \Lambda^s J_{\epsilon}u \rangle_{L^2} = 2 \langle \Lambda^s \partial_t J_{\epsilon}u , \Lambda^s J_{\epsilon}u \rangle_{L^2}\notag \\ 
& \quad = - 2 \langle \Lambda^s  J_{\epsilon} [(J_{\varepsilon} u) (J_{\varepsilon} u_x)] , \Lambda^s J_{\epsilon}u \rangle_{L^2} -2 \langle \Lambda^s  J_{\epsilon} [u\varphi_x + \varphi u_x + \varphi \varphi_x ] , \Lambda^s J_{\epsilon}u \rangle_{L^2} - 2 \langle \Lambda^s  J_{\epsilon} F(u) , \Lambda^s J_{\epsilon}u \rangle_{L^2} 
\end{align}
We now rewrite the first term of \eqref{engy_est} by first commuting the $J_{\varepsilon}$ and then using \eqref{self_adj}, arriving at
\begin{align}\label{engy_est_1}
&\left|\int_{\mathbb{T}} \Lambda^s [(J_{\varepsilon} u) (J_{\varepsilon} u_x)] \textit{ . } \Lambda^s J^2_{\epsilon}u dx\right| = \Bigg|\int_\mathbb{{T}} [\Lambda^s , J_{\varepsilon} u] \partial_{x} J_{\varepsilon} u \textit{ }\Lambda^s J_{\varepsilon}^2 u dx \notag \\
& \hspace{4cm}+ \int_{\mathbb{T}} J_{\varepsilon} u \partial_{x} \Lambda^s J_{\varepsilon} u \textit{ } \Lambda^s J_{\varepsilon}^2 u dx\Bigg|
\end{align}
where we have added and subtracted $J_{\varepsilon} u \partial_x \Lambda^s J_{\varepsilon} u $ and used the commutator.  Setting $ v = J_{\varepsilon} u,$ we can bound the first term of \eqref{engy_est_1} by first using Cauchy-Schwarz inequality  to arrive at 
\begin{align}\label{est_1_a}
\left|\int_{\mathbb{T}} [\Lambda^s, v] \partial_{x} v \textit{ } \Lambda^s J_{\varepsilon} v dx\right| \le \Big\|[\Lambda^s, v] \partial_{x} v\Big\|_{L^2} \Big\|\Lambda^s J_{\varepsilon} v\Big\|_{L^2}
\end{align}
Now using $\|J_{\varepsilon} u\|_{H^s} \le \|u\|_{H^s},$ and definition of $H^s$ norm we get $\Big\|\Lambda^s J_{\varepsilon} v\Big\|_{L^2} \le \|u\|_{H^s}.$ Then applying part $(2)$ of Lemma \ref{lemma_commutator} on $ \Big\|[\Lambda^s, v] \partial_{x} v\Big\|_{L^2} $  from \eqref{est_1_a} we get
\begin{align}
&\left|\int_{\mathbb{T}} [\Lambda^s, v] \partial_{x} v \textit{ } \Lambda^s J_{\varepsilon} v dx \right|\le \Big( \|\Lambda^s v\|_{L^2} \|\partial_x v\|_{L^{\infty}} + \|\partial_x v\|_{L^{\infty}} \|\partial_x v\|_{H^{s - 1}} \Big) \|u\|_{H^s} \notag \\
& \hspace{ 4cm }\le \Big( \|v\|_{H^s} \|\partial_x v\|_{L^{\infty}} + \|\partial_x v\|_{L^{\infty}} \|v\|_{H^s} \Big) \|u\|_{H^s}
\end{align}
Finally we get 
\begin{align}\label{est_1_a_fnl}
&\left|\int_{\mathbb{T}} [\Lambda^s, v] \partial_{x} v \textit{ } \Lambda^s J_{\varepsilon} v dx\right| \le \Big(\|J_{\varepsilon} u\|_{H^s} \|u_x\|_{L^{\infty}} \Big) \|u\|_{H^s} \ \textit{ } \Bigg( As \|\partial_xJ_{\varepsilon}u\|_{L^{\infty}} \le \|u_x\|_{L^{\infty}}\Bigg) \notag \\
& \hspace{4cm} \le \|u\|^2_{H^s} \|u_x\|_{L^{\infty}}
\end{align}
Now consider the second term of \eqref{engy_est_1} with setting $v = J_{\varepsilon}u$ we get,
\begin{align}\label{est_1_b}
\left|\int_{\mathbb{T}} v \partial_x \Lambda^s v J_{\varepsilon} \Lambda^s v dx\right| & = \left|\int_{\mathbb{T}} J_{\varepsilon} \Big( v \partial_x \Lambda^s v \Big) \Lambda^s v dx\right| \hspace{1cm} ( As \textit{ } \Lambda^s J_{\varepsilon} = J_{\varepsilon} \Lambda^s) \notag \\
&\quad =\left| \int_{\mathbb{T}} \Big([J_{\varepsilon}, v] \Lambda^s v_{x} \Big) \Lambda^s v dx +  \int_{\mathbb{T}}  v \Big( J_{\varepsilon} \partial_x \Lambda^s v \Big) \Lambda^s v dx\right| \notag \\
&\quad = \left|\int_{\mathbb{T}} \Big([J_{\varepsilon}, v] \Lambda^s v_{x} \Big) \Lambda^s v dx - \int_{\mathbb{T}} \partial_x v \Big( J_{\varepsilon} \Lambda^s v \Big) \Lambda^s v dx - \int_{\mathbb{T}} v \Big( J_{\varepsilon} \Lambda^s v \Big) \partial_x \Lambda^s v dx\right|
\end{align}
So from \eqref{est_1_b} we have 
\begin{align}\label{est_1_b_fnl}
\left|\int_{\mathbb{T}} v \partial_x \Lambda^s v J_{\varepsilon} \Lambda^s v dx \right| & \le \left|\frac{1}{2} \int_{\mathbb{T}} \Big([J_{\varepsilon}, v] \Lambda^s v_{x} \Big) \Lambda^s v dx\right| + \left|\frac{1}{2} \int_{\mathbb{T}} \partial_x v \Big( J_{\varepsilon} \Lambda^s v \Big) \Lambda^s v dx \right| \notag \\
& \lesssim \|[J_{\varepsilon}, v] \Lambda^s v_{x}  \|_{L^2} \|\Lambda^s v\|_{L^2} + \|\partial_x v\|_{L^{\infty}} \|J_{\varepsilon} \Lambda^s v \|_{L^2} \|\Lambda^s v\|_{L^2} \notag \\
&  \le \Big( \| \partial_x v\|_{L^{\infty}} \|\Lambda^s v\|_{L^2} \Big) \|v\|_{H^s} + \|\partial_x v\|_{L^{\infty}} \|v\|_{H^s}^2 \notag \\
& \lesssim \|u_x\|_{L^{\infty}} \|u\|^2_{H^s}
\end{align}
 where, for the estimate of the first integral of \eqref{est_1_b_fnl}, we used the following lemma applied with $w = v$ and $f = \Lambda^s v.$ Here also for applying next Lemma we have used $ s > \frac{3}{2}.$
\begin{lemma}\label{lemma_L2_L_infty}
Let $ w $ be such that $\|\partial_x w\|_{L^{\infty}} < \infty. $ Then there is a constant $ C > 0 $ such that for any $ f \in L^2, $ we have 
\begin{align}
\| [J_{\varepsilon}, w] \partial_x f \|_{L^2} \le c \|\partial_x w\|_{L^{\infty}} \|f\|_{L^2}.
\end{align}
\end{lemma}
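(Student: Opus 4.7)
The plan is to prove this classical Friedrichs-type commutator estimate by computing the commutator kernel explicitly, integrating by parts in $y$ to move the derivative off of $f$, and then exploiting the Lipschitz-type control $|w(y)-w(x)| \lesssim \|\partial_x w\|_{L^{\infty}} |y-x|$ together with the rapid decay of $j_\varepsilon$.

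First I would write, using $J_{\varepsilon}g = j_{\varepsilon} * g$,
\[
[J_{\varepsilon}, w]\partial_x f(x) = \int_{\mathbb{T}} j_{\varepsilon}(x-y)\bigl[w(y)-w(x)\bigr]\partial_y f(y)\,dy,
\]
and then integrate by parts in $y$ to obtain
\[
[J_{\varepsilon}, w]\partial_x f(x) = \int_{\mathbb{T}} (\partial_x j_{\varepsilon})(x-y)\bigl[w(y)-w(x)\bigr] f(y)\,dy \;-\; \int_{\mathbb{T}} j_{\varepsilon}(x-y)\,\partial_y w(y)\,f(y)\,dy,
\]
which I call $I_1(x) + I_2(x)$. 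The term $I_2$ is simply $-J_{\varepsilon}(\partial_x w \cdot f)$, so the uniform $L^2 \to L^2$ bound of $J_{\varepsilon}$ together with $\|\partial_x w \cdot f\|_{L^2} \le \|\partial_x w\|_{L^{\infty}}\|f\|_{L^2}$ yields the desired bound on $\|I_2\|_{L^2}$ directly.

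For $I_1$ I would apply the mean value theorem (in the periodic distance) to get $|w(y)-w(x)| \le \|\partial_x w\|_{L^{\infty}}|y-x|$, so that
\[
|I_1(x)| \le \|\partial_x w\|_{L^{\infty}} \int_{\mathbb{T}} \bigl|(y-x)(\partial_x j_{\varepsilon})(x-y)\bigr|\,|f(y)|\,dy.
\]
This is a convolution of $|f|$ with the kernel $K_{\varepsilon}(z) := z\,\partial_x j_{\varepsilon}(z)$, so by Young's inequality $\|I_1\|_{L^2} \le \|K_{\varepsilon}\|_{L^1}\|\partial_x w\|_{L^{\infty}}\|f\|_{L^2}$, and the proof reduces to showing $\sup_{\varepsilon \in (0,1]}\|K_{\varepsilon}\|_{L^1(\mathbb{T})} < \infty$.

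The hard part will be verifying this uniform-in-$\varepsilon$ $L^1$ bound in the periodic setting where $j_{\varepsilon}$ is defined via the Fourier multiplier $\widehat{j}(\varepsilon n)$ rather than by a simple rescaling. I would handle this by Poisson summation: the periodic $j_{\varepsilon}$ equals the periodization of $\varepsilon^{-1}j(\cdot/\varepsilon)$ (for a suitable Schwartz function $j$ with $\widehat{j}$ as given), so a change of variables $z = \varepsilon t$ shows $\|K_{\varepsilon}\|_{L^1(\mathbb{T})}$ is controlled by $\|t\,j'(t)\|_{L^1(\mathbb{R})}$, which is finite since $j \in \mathcal{S}(\mathbb{R})$. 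Summing the bounds on $I_1$ and $I_2$ then yields the claimed inequality with a constant independent of $\varepsilon$, which is exactly what the energy estimate in the proof of Proposition \ref{prp_existance} requires.
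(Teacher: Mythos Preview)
The paper does not supply a proof of this lemma; it is stated as a known Friedrichs-type commutator estimate (in the spirit of the references \cite{Kato_Ponce_1988} and \cite{Himonas_2014} that the paper invokes for the surrounding well-posedness argument) and then used directly in the energy estimates. Your argument is the standard one and is correct.

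Two points are worth tightening. First, after the pointwise bound on $I_1$ you are really convolving $|f|$ with $|K_\varepsilon|$ rather than with $K_\varepsilon$ itself, so Young's inequality should read $\|I_1\|_{L^2} \le \bigl\|\,|K_\varepsilon|\,\bigr\|_{L^1}\|\partial_x w\|_{L^\infty}\|f\|_{L^2}$. Second, in the periodization step the change of variables $z=\varepsilon t$ directly handles only the $k=0$ term of the sum $j_\varepsilon(z)=\sum_{k\in\mathbb{Z}} \varepsilon^{-1} j\bigl((z+2\pi k)/\varepsilon\bigr)$; the tail terms $|k|\ge 1$ need a separate (easy) bound using the Schwartz decay of $j'$ to show their total contribution to $\int_{-\pi}^{\pi} |z|\,|j_\varepsilon'(z)|\,dz$ is uniformly bounded in $\varepsilon$ (in fact $O(\varepsilon^{N-2})$ for any $N$). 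With these clarifications your proof delivers the estimate with a constant independent of $\varepsilon$, which is exactly what the subsequent energy argument in the proof of Proposition~\ref{prp_existance} requires.
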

Now consider the second term of \eqref{engy_est}
\begin{align}\label{engy_est_2}
\left|\int_{\mathbb{T}}\Lambda^s  J_{\epsilon} [u\varphi_x + \varphi u_x + \varphi \varphi_x ] \textit{ . } \Lambda^s J_{\epsilon}u dx \right|    &\le \left|\int_{\mathbb{T}}\Lambda^s  J_{\epsilon} u\varphi_x \textit{ . } \Lambda^s J_{\epsilon}u dx \right| + \left|\int_{\mathbb{T}}\Lambda^s  J_{\epsilon} u_x\varphi \textit{ . } \Lambda^s J_{\epsilon}u dx \right| \notag \\
&\hspace{2cm}+ \left|\int_{\mathbb{T}}\Lambda^s  J_{\epsilon} \varphi\varphi_x \textit{ . } \Lambda^s J_{\epsilon}u dx \right|
\end{align}
rewrite the first term
\begin{align}\label{est_2_a}
\left|\int_{\mathbb{T}}\Lambda^s u\varphi_x \textit{ . } \Lambda^s J_{\epsilon}^2u dx \right|  \le \|\Lambda^s u\varphi_x\|_{L^2} \|\Lambda^s J_{\epsilon}^2u\|_{L^2}  \le \| u\varphi_x\|_{H^s} \|u\|_{H^s}  \le \|\varphi_x\|_{H^s} \|u\|_{H^s}^2
\end{align}
where we have first applied Cauchy Schwarz then in the last inequality we have used that $H^s$ is an algebra for $s > \frac{1}{2}.$ Now we know by Young's inequality  for $1 < p < \infty, ab \le \frac{a^p}{p} + \frac{b^q}{q}$ where $\frac{1}{p} + \frac{1}{q} = 1.$Applying Young's inequality taking $ a = \|\varphi_x\|_{H^s}, b= \|u\|_{H^s}^2 $ and $ p = 3, q = \frac{3}{2}$ we have 
\begin{align}\label{est_2_a_fnl}
\left|\int_{\mathbb{T}}\Lambda^s u\varphi_x \textit{ . } \Lambda^s J_{\epsilon}^2u dx \right|  \le \frac{\|\varphi_x\|_{H^s}^3}{3} + \frac{(\|u\|_{H^s}^2)^{\frac{3}{2}}}{\frac{3}{2}} = \frac{\|\varphi_x\|_{H^s}^3}{3} + \frac{\|u\|_{H^s}^3}{\frac{3}{2}}  
\end{align}
Consider the second term of \eqref{engy_est_2} and rewrite it as we have done in \eqref{est_1_b}
\begin{align}\label{est_2_b}
\left|\int_{\mathbb{T}}\Lambda^s  u_x\varphi \textit{ . } \Lambda^s J_{\epsilon}^2u dx \right|  & \le  \left|\frac{1}{2} \int_{\mathbb{T}} \Big([J_{\varepsilon}, \varphi] \Lambda^s u_{x} \Big) \Lambda^s u dx\right| + \left|\frac{1}{2} \int_{\mathbb{T}} \partial_x \varphi \Big( J_{\varepsilon} \Lambda^s u \Big) \Lambda^s u dx \right| \notag \\
&\quad \le \|[J_{\varepsilon}, \varphi] \Lambda^s u_{x}\|_{L^2} \|\Lambda^s u\|_{L^2} + \|\varphi_x\|_{L^{\infty}} \|\Lambda^s u\|_{L^2}^2 \notag \\
& \quad \quad \le \|\varphi_x\|_{L^{\infty}} \|u\|_{H^s}^2
\end{align}
where in the last line we have again used Lemma \ref{lemma_L2_L_infty}.
Now consider the last term of \eqref{engy_est_2}
\begin{align}\label{est_2_c}
\left|\int_{\mathbb{T}}\Lambda^s  J_{\epsilon} \varphi\varphi_x \textit{ . } \Lambda^s J_{\epsilon}u dx \right| \le \|\Lambda^s  J_{\epsilon} \varphi\varphi_x\|_{L^2} \|\Lambda^s J_{\epsilon}u\|_{L^2} \le \|\varphi\varphi_x\|_{H^s} \|u\|_{H^s} \le \|\varphi\|_{H^s} \|\varphi_x\|_{H^s}\|u\|_{H^s}
\end{align}
where we have used for  $s > \frac{1}{2}, H^s$ forms an algebra. Now consider the third term of \eqref{engy_est}
\begin{align}\label{engy_est_3}
\left|\int_{\mathbb{T}} \Lambda^s J_{\varepsilon} F(u) \textit{ . } \Lambda^s J_{\varepsilon} u dx  \right| \le \|F(u)\|_{H^s} \|u\|_{H^s}   
\end{align}
Now  expression of $F(u)$ given by \eqref{exp_F(u)}, so calculate $\|F(u)\|_{H^s}$
\begin{align}\label{nrm_F(u)}
&\left\|\Lambda^{-2}\Big[ \partial_x(u+\varphi) + 2(u+\varphi)\partial_x (u+\varphi) + \partial_x(u+\varphi)\partial_{xx}(u+\varphi) + f\Big]  \right\|_{H^s} \notag \\
& \lesssim \|\Lambda^{-2}\partial_x(u+\varphi)\|_{H^s} + \|\Lambda^{-2} (u+\varphi)\partial_x (u+\varphi)\|_{H^s} + \|\Lambda^{-2} \partial_x(u+\varphi)\partial_{xx}(u+\varphi) \|_{H^s} + \|\Lambda^{-2} f\|_{H^s} \notag \\
& \lesssim \|\Lambda^{-2}\partial_x u \|_{H^s} + \|\Lambda^{-2}\partial_x \varphi \|_{H^s} + \|\Lambda^{-2}\partial_x u^2 \|_{H^s} + \|\Lambda^{-2}\partial_x \varphi^2 \|_{H^s} + \|\Lambda^{-2}\partial_x (u \varphi) \|_{H^s} + \|\Lambda^{-2}\partial_x u_x^2 \|_{H^s} \notag \\
& \hspace{4cm}+ \|\Lambda^{-2}\partial_x \varphi_x^2 \|_{H^s} \|\Lambda^{-2}\partial_x (u_x \varphi_x) \|_{H^s} + \|\Lambda^{-2} f\|_{H^s} \notag \\
& \lesssim \|u\|_{H^{s-1}} + \|\varphi\|_{H^{s-1}} + \|u^2\|_{H^{s-1}} + \|\varphi^2\|_{H^{s-1}} + \|u\varphi\|_{H^{s-1}} + \|u_x^2\|_{H^{s-1}} + \|\varphi_x^2\|_{H^{s-1}} \notag \\ 
& \hspace{6cm}+ \|u_x \varphi_x\|_{H^{s-1}} + \|f\|_{H^{s - 2}} \notag \\
& \le  \|u\|_{H^s} + \|\varphi\|_{H^s} + \|u\|_{H^{s-1}} \|u\|_{H^{s-1}} + \|\varphi\|_{H^{s-1}} \|\varphi\|_{H^{s-1}} + \|u\|_{H^{s-1}} \|\varphi\|_{H^{s-1}}\notag \\
& \quad \hspace{2cm}+ \|u_x\|_{H^{s-1}} \|u_x\|_{H^{s-1}} + \|\varphi_x\|_{H^{s-1}} \|\varphi_x\|_{H^{s-1}} + \|u_x\|_{H^{s-1}} \|\varphi_x\|_{H^{s-1}} + \|f\|_{H^{s-2}}
\end{align}
where we have used that $\Lambda^{-2} \partial_x : H^{s - 1} \to H^s$ is a bounded operator, $ \|u\|_{H^{s-1}} \le \|u\|_{H^s}$ and assuming $s > \frac{3}{2}$ then $H^{s - 1}$ forms an algebra . i.e $$\|u v \|_{H^{s - 1}} \le \|u\|_{H^{s-1}} \|v\|_{H^{s-1}}.$$ So from \eqref{engy_est_3} and \eqref{nrm_F(u)} we get 
\begin{align}\label{engy_est_3_fnl}
\left|\int_{\mathbb{T}} \Lambda^s J_{\varepsilon} F(u) \textit{ . } \Lambda^s J_{\varepsilon} u dx  \right| & \le \Big(\|u\|_{H^s} + \|\varphi\|_{H^s} + \|u\|_{H^s}^2 + \|\varphi\|_{H^s}^2 + \|u\|_{H^s} \|\varphi\|_{H^s} \notag \\
& \hspace{3cm} + \|u\|_{H^s}^2 + \|\varphi\|_{H^s}^2 + \|f\|_{H^{s - 2}}  \Big) \|u\|_{H^s}   
\end{align}
Now assuming $s > \frac{3}{2}, \|\varphi_x\|_{H^s} < \infty$ and $\|f\|_{H^{s - 2}} < \infty$ then using $\|u_x\|_{L^{\infty}} \le \|u\|_{H^s}$ and Young's inequality  from \eqref{engy_est}  taking $ \varepsilon \to 0,$ we get
\begin{align}\label{engy_est_fnl}
& \frac{d}{dt}  \| u\|_{H^s}^2  \le C ( 1 + \|u\|_{H^s}^2 + \|u\|_{H^s}^3 )  
\end{align}
Let $h(t) = \|u(t)\|_{H^s}^{2}$ so the equation reduce to $ \frac{d}{dt}h(t) \le C \Big(1 + h(t) + h(t)^{\frac{3}{2}} \Big)  $ Now there are two cases
\begin{itemize}
    \item if $ h(t) \le 1, $ then we have $ 1 + h(t) + h(t)^{\frac{3}{2}}  \le 3.$

    \item if $ h(t) \ge 1, $ the we have $ 1 \le h(t) \le h(t)^{\frac{3}{2}} \le h(t)^2,$
\end{itemize}
In any case we get the following 
\begin{align}\label{ineq_h}
& \frac{d}{dt}h(t) \le 3C \Big( 1 + h(t)^2 \Big) \notag \\
&\implies   \int_0^t \frac{d h(t)}{ 1 + h(t)^2} \le 3C \int_0^t dt  \implies \tan^{-1}\Big(h(t)\Big) - \tan^{-1}\Big(h(0)\Big) \le 3Ct \notag \\
& \implies h(t) \le \tan\Big( 3Ct + \tan^{-1}\Big(h(0)\Big) \Big)
\end{align}
From  \eqref{ineq_h} we get, $ \|u(t)\|_{H^s}^2 \le \tan\Big( 3Ct + \tan^{-1}\Big( \|u_0\|_{H^s}^2 \Big)   ,$ Now $ \|u_0\|_{H^s}^2 \in \mathbb{R}^{\ge 0}.$ then $$0 \le \tan^{-1} \Big( \|u_0\|_{H^s}^2 \Big) < \frac{\pi}{2} $$
Choose $T_*( u_0, f , \varphi) > 0,$ such that $$0 \le 3C T_* + \tan^{-1} \Big( \|u_0\|_{H^s}^2 \Big) < \frac{\pi}{2},$$
then $\forall t \le T_*(u_0, f, \varphi)$ we have $$3C t + \tan^{-1} \Big( \|u_0\|_{H^s}^2 \Big) \le 3C T_* + \tan^{-1} \Big( \|u_0\|_{H^s}^2 \Big)$$ so the solution exists $\forall t \in [0, T_*].$ Hence, for $s > \frac{3}{2}$ we have $u \in C([0, T_*], H^s(\mathbb{T})).$

\subsection{Proof of Proposition \ref{prp_cntity}} As we have seen the equation is wellposed for $s > \frac{3}{2},$ so through out this prove we will strict our self in the space $H^s(\mathbb{T})$ for $s > \frac{3}{2}.$ we want to prove for given $u_0, v_0 \in H^{s + 1}(\mathbb{T}) $
\begin{align*}
\|\mathcal{R}_t(u_0, 0, g) - \mathcal{R}_t(v_0, 0, g)\|_{H^s}\le c \|u_0 - v_0\|_{H^s} 
\end{align*}
\begin{proof}
As we have defined in Section $2,$ $\mathcal{R}_t(u_0, 0, g)$ and $\mathcal{R}_t(v_0, 0, g)$ are the respective solutions at time $t$ of the equation 
\begin{align}\label{u_equn}
\begin{cases}
u_t = -u\partial_xu-(1-\partial_{xx})^{-1}\Big[\partial_x u + 2u\partial_x u + \partial_x u\partial_{xx} u\Big] + (1 - \partial_{xx})^{-1}g  \\
u(0, x) = u_0(x)
\end{cases}
\end{align}
and 
\begin{align}\label{v_equn}
\begin{cases}
v_t = -v\partial_xv-(1-\partial_{xx})^{-1}\Big[\partial_x v + 2v\partial_x v + \partial_x v\partial_{xx} v\Big] + (1 - \partial_{xx})^{-1}g  \\
v(0, x) = v_0(x)
\end{cases}
\end{align}

Then by the previous Proposition there exists $T^1_* , T^2_* > 0$ such that $u \in C([0, T^1_*]; H^{s + 1}(\mathbb{T}))$ and  $v \in C([0, T^2_*]; H^{s + 1}(\mathbb{T})).$ Now subtracting \eqref{v_equn} from \eqref{u_equn} and setting $w = u - v, f = u + v $ and  $F(u) := (1-\partial_{xx})^{-1}\Big[\partial_x v + 2v\partial_x v + \partial_x v\partial_{xx} v\Big]$ consider the equation 
\begin{align}\label{w_equn}
\begin{cases}
w_t = -\frac{1}{2}\partial_x (fw)- \Big[ F(u) - F(v) \Big] \\
w(0, x) = w_0(x)
\end{cases}
\end{align}
As we have discussed in the proof of Proposition \ref{prp_existance}, we have same problem with term $\partial_x(fw)$ so we again consider the following mollified i.v.p.
\begin{align}\label{mol_w_equn}
\begin{cases}
w_t = -\frac{1}{2}\partial_x [(J_{\varepsilon}f)(J_{\varepsilon}w)]- \Big[ F(u) - F(v) \Big] \\
w(0, x) = w_0(x)
\end{cases}
\end{align}
Calculating the $H^s$ energy of $w$ gives the equation
\begin{align}\label{w_est}
&\frac{d}{dt}  \|J_{\epsilon} w\|_{H^s}^2  =  \frac{d}{dt}\langle \Lambda^s J_{\epsilon}w , \Lambda^s J_{\epsilon}w \rangle_{L^2} = 2 \langle \Lambda^s \partial_t J_{\epsilon}w, \Lambda^s J_{\epsilon}w \rangle_{L^2}\notag \\ 
& \quad = -  \langle \Lambda^s  J_{\epsilon} \partial_x [(J_{\varepsilon} f) (J_{\varepsilon} w)] , \Lambda^s J_{\epsilon}w \rangle_{L^2} -2 \langle \Lambda^s  J_{\epsilon} [F(u) - F(v)] , \Lambda^s J_{\epsilon}w \rangle_{L^2} 
\end{align}
Consider the first term of \eqref{w_est}
 first commuting the $J_{\varepsilon}$ and then using \eqref{self_adj}, arriving at
\begin{align}\label{w_est_1}
&\left|\int_{\mathbb{T}} \Lambda^s \partial_x [(J_{\varepsilon} f) (J_{\varepsilon} w)] \textit{ . } \Lambda^s J^2_{\epsilon}w dx\right| = \Bigg|\int_\mathbb{{T}} [\Lambda^s \partial_x , J_{\varepsilon} f] J_{\varepsilon} w \textit{  }\Lambda^s J_{\varepsilon}^2 w dx \notag \\
& \hspace{4cm}+ \int_{\mathbb{T}} J_{\varepsilon} f \partial_{x} \Lambda^s J_{\varepsilon} w \textit{ } \Lambda^s J_{\varepsilon}^2 w dx\Bigg|
\end{align}
Now consider the first part of \eqref{w_est_1} and apply cauchy-Schwarz inequality
\begin{align}\label{w_est_1_a}
\Bigg|\int_\mathbb{{T}} [\Lambda^s \partial_x , J_{\varepsilon} f] J_{\varepsilon} w \textit{  }\Lambda^s J_{\varepsilon}^2 w dx \Bigg| \le \Bigg\|[\Lambda^s \partial_x , J_{\varepsilon} f] J_{\varepsilon} w\Bigg\|_{L^2}  \Bigg\|\Lambda^s J_{\varepsilon}^2 w\Bigg\|_{L^2}  
\end{align}
after applying \eqref{Kato_Ponce_estimate} on first part \eqref{w_est_1_a} and use $\|J_{\varepsilon}w\|_{H^s} \le \|w\|_{H^s},$ we get
\begin{align}\label{w_est_1_a_fnl}
\Bigg|\int_\mathbb{{T}} [\Lambda^s \partial_x , J_{\varepsilon} f] J_{\varepsilon} w \textit{  }\Lambda^s J_{\varepsilon}^2 w dx \Bigg|  & \le \Bigg( \|\Lambda^s \partial_x J_{\varepsilon}f\|_{L^2} \|J_{\varepsilon}w\|_{L^{\infty}} + \|\partial_x J_{\varepsilon} f\|_{L^{\infty}} \|\Lambda^{s - 1} \partial_x J_{\varepsilon}w\|_{L^2} \Bigg) \|w\|_{H^s} \notag \\
&\le \|f\|_{H^{s + 1}} \|w\|_{H^s}^2
\end{align}
where we have used as $f \in H^{s+1}$ then $\|f_x\|_{L^{\infty}} \le \|f_x\|_{H^s} \le \|f\|_{H^{s+1}} .$

Considering the second term of \eqref{w_est_1} we have and using \eqref{self_adj}
\begin{align}\label{w_est_1_b}
\Bigg|\int_{\mathbb{T}} J_{\varepsilon} f \partial_{x} \Lambda^s J_{\varepsilon} w \textit{ } \Lambda^s J_{\varepsilon}^2 w dx\Bigg|  & = \Bigg|\int_{\mathbb{T}} J_{\varepsilon} f \partial_{x} \Lambda^s J_{\varepsilon} w \textit{ . } J_{\varepsilon} \Lambda^s J_{\varepsilon} w dx\Bigg| = \Bigg|\int_{\mathbb{T}} J_{\varepsilon}^2 f \partial_{x} \Lambda^s J_{\varepsilon} w \textit{ . } \Lambda^s J_{\varepsilon} w dx\Bigg| \notag \\
& \quad = \Bigg|\int_{\mathbb{T}} J_{\varepsilon} f \partial_{x} \Big(\Lambda^s J_{\varepsilon} w \Big)^2 dx \Bigg| \le \|\partial_x f\|_{L^{\infty}} \|w\|_{H^s}^2  \le \|f\|_{H^{s + 1}} \|w\|_{H^s}^2.
\end{align}

Now consider second term of \eqref{w_est},
\begin{align}\label{w_est_2}
\Bigg|\int_{\mathbb{T}} \Lambda^s  J_{\epsilon} [F(u) - F(v)]  \Lambda^s J_{\epsilon}w  dx\Bigg| \le \|F(u) - F(v)\|_{H^s} \|w\|_{H^s} 
\end{align}
Now $F(u) = (1-\partial_{xx})^{-1}\Big[\partial_x v + 2v\partial_x v + \partial_x v\partial_{xx} v\Big] $ So 
\begin{align}\label{w_est_2_a}
\|F(u) - F(v)\|_{H^s} & = \|(1 - \partial_{xx})^{-1} \partial_x (u-v) \|_{H^s} + \|(1 - \partial_{xx})^{-1} \partial_x (u^2 - v^2) \|_{H^s} \notag \\
& \hspace{2cm} + \frac{1}{2} \|(1 - \partial_{xx})^{-1} \partial_x (u_x^2 - v_x^2) \|_{H^s} \notag \\
& \quad \le \| u - v\|_{H^{s - 1}} + \|u^2 - v^2\|_{H^{s - 1}} + \|u_x^2 - v_x^2\|_{H^{s-1}} \notag \\
& \quad \le \| u - v\|_{H^s} + \|(u - v) (u + v)\|_{H^{s - 1}} + \|(u_x - v_x) (u_x + v_x)\|_{H^{s-1}} \notag \\
& \quad  \le \| u - v\|_{H^s} + \|(u - v)\|_{H^{s -1}} \|(u + v)\|_{H^{s - 1}} + \|(u_x - v_x)\|_{H^{s-1}} \|(u_x + v_x)\|_{H^{s-1}} \notag \\
& \quad \le \Big( 1 + \|u + v\|_{H^{s - 1}} + \|u + v\|_{H^s}  \Big) \|u - v\|_{H^s} \notag \\
& \quad \le c_0\Big(1 + \|f\|_{H^{s + 1}}\Big) \|w\|_{H^s}
\end{align}
Now using \eqref{w_est_1} - \eqref{w_est_2_a} from \eqref{w_est} and taking $\varepsilon \to 0,$ we have
\begin{align}\label{w_est_fnl}
 \frac{d}{dt} \|w\|_{H^s}^2 \le c_1 \Big(1 + \|f\|_{H^{s + 1}} \Big)\|w\|_{H^s}^2   
\end{align}
Take $0 < T < \min\left\{\frac{T^1_*}{2}, \frac{T^2_*}{2}\right\},$ then $f \in C([0, T]; H^{s+1}),$ now from \eqref{w_est_fnl} for  any $ t \le T,$ there exists $K := c_1\Big(1 + \|f\|_{H^{s + 1}} \Big)$ such that
\begin{align*}
\|w(t)\|_{H^s} \le e^{KT}\|w(0)\|_{H^s} \text{ for all } t \in [0, T].
\end{align*}
Hence we have obtain \eqref{lips_property}.

\end{proof}

\subsection{Proof of Proposition \ref{prp_limit}}Let $ s > \frac{3}{2},$ for all $ u_0, \varphi \in \eta_0 \in H^{s + 1}(\mathbb{T})$, then there exists $\delta_0 > 0$ such that $(\delta^{-\frac{1}{2}} \varphi, \delta^{-1} \eta_0) \in \widehat{\Theta}(u_0, T)$ for any $\delta \in (0,\delta_0)$, the following limit holds at $t = \delta$
\begin{align*}
\mathcal{R}_{\delta}(u_0, \delta^{-\frac{1}{2}}\varphi, \delta^{-1}\eta_0) \rightarrow u_0 - \varphi\varphi_x + (1 - \partial_{xx})^{-1}\left(\eta_0 - 2\varphi\varphi_x - \varphi_x\varphi_{xx}\right), \text{in}\ H^s(\mathbb{T})  \quad \text{ as }\ \delta \rightarrow 0. 
\end{align*}
\begin{proof} Let us consider the equation
 \begin{align}\label{equn_P1}
\begin{cases}
u_t =-(1-\partial_{xx})^{-1}\Big[(u+\delta^{-\frac{1}{2}}\varphi)_x + 2(u+\delta^{-\frac{1}{2}}\varphi)(u+\delta^{-\frac{1}{2}}\varphi)_x + (u+\delta^{-\frac{1}{2}}\varphi)_x(u+\delta^{-\frac{1}{2}}\varphi)_{xx} \\ \hspace{8cm}-\delta^{-1}\eta_0\Big] -(u+\delta^{-\frac{1}{2}}\varphi)(u+\delta^{-\frac{1}{2}}\varphi)_x \\
u(0,x) = u_0
\end{cases}
\end{align}
Make a time substitution and consider the functions $$v(t) := u(\delta t),$$ $$w(t) := u_0 + t\Big\{-\varphi\varphi_x + (1 - \partial_{xx})^{-1}\left(\eta_0 - 2\varphi\varphi_x - \varphi_x\varphi_{xx}\right)\Big\},$$ and $$z(t) := v(t) - w(t)$$
where $t \in [0, 1].$ Then it is not difficult to see that $z$ is a solution of the following system 
\begin{align}
\begin{cases}\label{equn_P2}
z_t =-\delta\Bigg\{(1-\partial_{xx})^{-1}\Big[(z + w +\delta^{-\frac{1}{2}}\varphi)_x + 2(z + w )(z + w +\delta^{-\frac{1}{2}}\varphi)_x + 2\delta^{-\frac{1}{2}}\varphi(z + w)_x +\\ \hspace{2cm}(z + w )_x(z + w +\delta^{-\frac{1}{2}}\varphi)_{xx} +\delta^{-\frac{1}{2}} \varphi_x(z + w)_{xx}\Big] +(z + w )(z + w +\delta^{-\frac{1}{2}}\varphi)_x + \\ \hspace{8cm}\delta^{-\frac{1}{2}} \varphi(z + w)_{x}\Bigg\} \\
z(0,x) = 0.
\end{cases}
\end{align}
Our aim to show $\|z(t)\|^2_{H^s(\mathbb{T})} \le C\delta. \quad \forall t \in [0, 1].$ In particular $\|z(1)\|_{H^s}^2 \to 0$ as $\delta \to 0.$  We can write the above equation like an ODE as \eqref{inv_eqn_main}
\begin{equation}\label{z_ODE}
\begin{cases}
z_t = -\delta\Big\{(z + w )(z + w +\delta^{-\frac{1}{2}}\varphi)_x +\delta^{-\frac{1}{2}} \varphi(z + w)_{x} + \tilde{F}(u) \Big\} \\
z(0,x)= 0.
\end{cases}
\end{equation}
where
\begin{align}\label{tilde_F(u)}
\tilde{F}(u) & = \Lambda^{-2}\Big[ (z + w +\delta^{-\frac{1}{2}}\varphi)_x + 2(z + w )(z + w +\delta^{-\frac{1}{2}}\varphi)_x + 2\delta^{-\frac{1}{2}}\varphi(z + w)_x \notag \\  & \hspace{5cm} + (z + w )_x(z + w +\delta^{-\frac{1}{2}}\varphi)_{xx} +\delta^{-\frac{1}{2}} \varphi_x(z + w)_{xx} \Big] 
\end{align}
As we have discussed in the proof of Proposition \ref{prp_cntity} to consider \eqref{z_ODE} like an ODE in the Banach space we  the nonlinear term of the equation. i.e using the Friedrichs mollifier $J_{\varepsilon},$  we mollified the term $z z_x $ as $(J_{\varepsilon} z) (J_{\varepsilon} z_x).$ So we do the followings : 

\textbf{The Mollified i.v.p.} Next , we study the following mollified version of problem  of \eqref{z_ODE}
\begin{align}\label{z_mollified_ode}
\begin{cases}
z_t = - \delta\left\{\left[(J_{\varepsilon} z) (J_{\varepsilon} z_x) + w w_x + z w_x + w z_x + \delta^{-\frac{1}{2}} z\varphi_x + \delta^{-\frac{1}{2}} w\varphi_x\right] + \delta^{-\frac{1}{2}} \varphi(z + w)_{x} + \tilde{F}(u)\right\} \\
z(0, x) = 0
\end{cases}
\end{align}
Similarly we have,
\begin{align}\label{z_engy_est}
&\frac{d}{dt}  \|J_{\epsilon} z\|_{H^s}^2  =  \frac{d}{dt}\langle \Lambda^s J_{\epsilon}z , \Lambda^s J_{\epsilon}z \rangle_{L^2} = 2 \langle \Lambda^s \partial_t J_{\epsilon}z , \Lambda^s J_{\epsilon}z \rangle_{L^2}\notag \\ 
& \quad = - 2 \delta \Big\langle \Lambda^s  J_{\epsilon} [(J_{\varepsilon} z) (J_{\varepsilon} z_x)] , \Lambda^s J_{\epsilon}z \Big\rangle_{L^2} -2\delta \Big\langle \Lambda^s  J_{\epsilon} [ww_x + z w_x + w z_x + \delta^{-\frac{1}{2}} z\varphi_x + \delta^{-\frac{1}{2}} w\varphi_x ] , \Lambda^s J_{\epsilon}z \Big\rangle_{L^2} \notag \\
& \hspace{2cm}\quad - 2\delta^{\frac{1}{2}} \Big\langle \Lambda^s  J_{\epsilon}  \varphi(z + w)_{x}   , \Lambda^s J_{\epsilon}z \Big\rangle_{L^2} - 2 \delta \Big\langle \Lambda^s  J_{\epsilon} \tilde{F}(u) , \Lambda^s J_{\epsilon}z \Big\rangle_{L^2} 
\end{align}
Now Consider the first term of \eqref{z_engy_est}, and doing the same as \eqref{est_1_a_fnl} and \eqref{est_1_b_fnl} we have
\begin{align}\label{z_est_1}
&\left|\int_{\mathbb{T}} \Lambda^s [(J_{\varepsilon} z) (J_{\varepsilon} z_x)] \textit{ . } \Lambda^s J^2_{\epsilon}z dx\right| \le \|z_x\|_{L^{\infty}} \|z\|_{H^s}^2
\end{align}
For the second term of \eqref{z_engy_est}, we have
\begin{align}\label{z_est_2}
&\left|\int_{\mathbb{T}} \Lambda^s J_{\varepsilon} [ww_x + z w_x + w z_x + \delta^{-\frac{1}{2}} z\varphi_x + \delta^{-\frac{1}{2}} w\varphi_x ] \textit{ . } \Lambda^s J_{\epsilon}z dx\right| \notag \\
& \le \left|\int_{\mathbb{T}} \Lambda^s J_{\varepsilon} (w w_x )\textit{ . } \Lambda^s J_{\epsilon}z dx\right| +  \left|\int_{\mathbb{T}} \Lambda^s J_{\varepsilon} (z w_x )\textit{ . } \Lambda^s J_{\epsilon}z dx\right| + \left|\int_{\mathbb{T}} \Lambda^s J_{\varepsilon} (w z_x ) \textit{ . } \Lambda^s J_{\epsilon}z dx\right|\notag \\
&\hspace{3cm}+ \left|\int_{\mathbb{T}} \Lambda^s J_{\varepsilon}  \delta^{-\frac{1}{2}} (z\varphi_x)  \textit{ . } \Lambda^s J_{\epsilon}z dx\right| + \left|\int_{\mathbb{T}} \Lambda^s J_{\varepsilon}  \delta^{-\frac{1}{2}} (w\varphi_x ) \textit{ . } \Lambda^s J_{\epsilon}z dx\right| \notag \\
\end{align}
For the first term of \eqref{z_est_2} applying Cauchy Schwarz inequality and  for $s > \frac{3}{2}, H^s$ is  an algebra, we have
\begin{align}\label{z_est_2_a}
\left|\int_{\mathbb{T}} \Lambda^s J_{\varepsilon} (w w_x )\textit{ . } \Lambda^s J_{\epsilon}z dx\right| \le \Big\|\Lambda^s J_{\varepsilon} (w w_x )\Big\|_{L^2} \Big\|\Lambda^s J_{\epsilon}z\Big\|_{L^2} \le \Big\|(w w_x )\Big\|_{H^s} \Big\|z\Big\|_{H^s} \le \Big\| w\Big\|_{H^s} \Big\| w_x \Big\|_{H^s} \Big\|z\Big\|_{H^s} 
\end{align}
Similarly for the second term of \eqref{z_est_2} applying Cauchy Schwarz inequality and  for $s > \frac{3}{2}, H^s$ is  an algebra, we have
\begin{align}\label{z_est_2_b}
\left|\int_{\mathbb{T}} \Lambda^s J_{\varepsilon} (z w_x )\textit{ . } \Lambda^s J_{\epsilon}z dx\right| \le \Big\|\Lambda^s J_{\varepsilon} (z w_x )\Big\|_{L^2} \Big\|\Lambda^s J_{\epsilon}z\Big\|_{L^2} \le \Big\|(z w_x )\Big\|_{H^s} \Big\|z\Big\|_{H^s} \le \Big\| w_x \Big\|_{H^s} \Big\|z\Big\|_{H^s}^2  
\end{align}

Now consider the third term of \eqref{z_est_2}, we have 
\begin{align}
&\left|\int_{\mathbb{T}} \Lambda^s J_{\varepsilon} (w z_x ) \textit{ . } \Lambda^s J_{\epsilon}z dx\right| =  \left|\int_{\mathbb{T}} \Lambda^s (w z_x ) \textit{ . } \Lambda^s J_{\epsilon}^2z dx\right|  = \left|\int_{\mathbb{T}} \Big\{\Lambda^s (w z_x ) - w(\Lambda^s z_x) + w(\Lambda^s z_x) \Big\} \textit{ . } \Lambda^s J_{\epsilon}^2z dx\right| \notag \\
& \quad \le \left|\int_{\mathbb{T}} [\Lambda^s , w] z_x \textit{ . } \Lambda^s J_{\epsilon}^2z dx\right| + \left|\int_{\mathbb{T}} w(\Lambda^s z_x ) \textit{ . } \Lambda^s J_{\epsilon}^2z dx\right| \notag \\
& \quad \le \|[\Lambda^s , w] z_x\|_{L^2} \|\Lambda^s J_{\epsilon}^2z\|_{L^2} + \left|\int_{\mathbb{T}} J_{\varepsilon}(w \partial_x \Lambda^s z ) \textit{ . } \Lambda^s J_{\epsilon}z dx\right| \notag \\
&\quad \le \|[\Lambda^s , w] z_x\|_{L^2}\|J_{\varepsilon}z\|_{H^s} +  \left|\frac{1}{2} \int_{\mathbb{T}} \Big([J_{\varepsilon}, w] \Lambda^s z_{x} \Big) \Lambda^s J_{\varepsilon} z dx\right| + \left|\frac{1}{2} \int_{\mathbb{T}} \partial_x w \Big( J_{\varepsilon} \Lambda^s z \Big) \Lambda^s J_{\varepsilon} z dx \right|  \textit{ ( same as \eqref{est_1_b}) } \notag  \\
& \quad \le \Big( \|\Lambda^s w\|_{L^2} \|z_x\|_{L^{\infty}} + \|w_x\|_{L^{\infty}} \|\Lambda^{s - 1} z_x\|_{L^2} \Big) \|z\|_{H^s} + \Big(\|w_x\|_{L^{\infty}} \|z\|_{H^s} \Big)\|z\|_{H^s} + \|w_x\|_{L^{\infty}} \|z\|_{H^s}^2 \notag \\
& \quad \le \Big( \| w\|_{H^s} \|z_x\|_{L^{\infty}} + \|w_x\|_{L^{\infty}} \| z\|_{H^s} \Big) \|z\|_{H^s}  +  \|w_x\|_{L^{\infty}} \|z\|_{H^s}^2  \label{z_est_2_c}
\end{align}
As we have done for the second term of \eqref{z_est_2}, i.e like \eqref{z_est_2_b} we have for the fourth and the fifth terms
\begin{align}\label{z_est_2_d}
\left|\int_{\mathbb{T}} \Lambda^s J_{\varepsilon}  \delta^{-\frac{1}{2}} (z\varphi_x)  \textit{ . } \Lambda^s J_{\epsilon}z dx\right| \le \delta^{-\frac{1}{2}} \Big\|\varphi_x\Big\|_{H^s} \Big\|z\Big\|_{H^s}^2    
\end{align}
and
\begin{align}\label{z_est_2_e}
\left|\int_{\mathbb{T}} \Lambda^s J_{\varepsilon}  \delta^{-\frac{1}{2}} (w\varphi_x)  \textit{ . } \Lambda^s J_{\epsilon}z dx\right| \le \delta^{-\frac{1}{2}} \Big\|w\Big\|_{H^s}\Big\|\varphi_x\Big\|_{H^s} \Big\|z\Big\|_{H^s}    
\end{align}
Now consider the third term of \eqref{z_engy_est}, we have
\begin{align}
&\Big| \int_{\mathbb{T}} \Lambda^s  J_{\epsilon}  \varphi(z + w)_{x} \textit{ . } \Lambda^s J_{\epsilon}z dx  \Big| \le   \Big| \int_{\mathbb{T}} \Lambda^s  J_{\epsilon}  \varphi z_{x} \textit{ . } \Lambda^s J_{\epsilon}z dx  \Big| + \Big| \int_{\mathbb{T}} \Lambda^s  J_{\epsilon}  \varphi w_{x} \textit{ . } \Lambda^s J_{\epsilon}z dx  \Big|\notag \\
& \le \Big(\|\varphi_x\|_{L^{\infty}} \|z\|_{H^s} \Big)\|z\|_{H^s} + \|\varphi_x\|_{L^{\infty}} \|z\|_{H^s}^2 + \|\varphi\|_{H^s} \|w_x\|_{H^s} \|z\|_{H^s} \textit{ (using \eqref{est_1_b}) } \notag \\
& \le \|\varphi_x\|_{L^{\infty}} \|z\|_{H^s}^2 + \|\varphi\|_{H^s} \|w_x\|_{H^s} \|z\|_{H^s} \label{z_est_3}
\end{align}
Now we bound the  remaining term \eqref{z_engy_est}
\begin{align}\label{z_est_4}
\Big| \int_{\mathbb{T}} \Lambda^s  J_{\epsilon} \Tilde{F}(u) \textit{ . } \Lambda^s J_{\epsilon}z dx  \Big| \le   \|\tilde{F}(u) \|_{H^s} \|z\|_{H^s} 
\end{align} 
Now $\tilde{F}(u)$ is given by \eqref{tilde_F(u)} and we know $\Lambda^{-2} \partial_x : H^{s - 1} \to H^s$ is a bounded operator and assuming $s > \frac{3}{2}$ then $H^{s - 1}$ forms an algebra . i.e $$\|u v \|_{H^{s - 1}} \le \|u\|_{H^{s-1}} \|v\|_{H^{s-1}}.$$  So 
\begin{align}\label{nrm_tilde_F(u)}
\Big\| & \Lambda^{-2}\Big[ (z + w +\delta^{-\frac{1}{2}}\varphi)_x + 2(z + w )(z + w +\delta^{-\frac{1}{2}}\varphi)_x + 2\delta^{-\frac{1}{2}}\varphi(z + w)_x \notag \\
& \hspace{5cm}+ (z + w )_x(z + w +\delta^{-\frac{1}{2}}\varphi)_{xx} +\delta^{-\frac{1}{2}} \varphi_x(z + w)_{xx} \Big]  \Big\|_{H^s}  \notag \\
& \le \|\Lambda^{-2} \partial_x z \|_{H^s} + \|\Lambda^{-2} \partial_x w \|_{H^s} + \delta^{-\frac{1}{2}} \|\Lambda^{-2} \partial_x \varphi \|_{H^s} + \|\Lambda^{-2} \partial_x (z^2) \|_{H^s} + \|\Lambda^{-2} \partial_x (w^2) \|_{H^s} \notag \\
& \quad + \|\Lambda^{-2} \partial_x (zw) \|_{H^s} + \delta^{-\frac{1}{2}} \|\Lambda^{-2} \partial_x (z \varphi) \|_{H^s} + \delta^{-\frac{1}{2}} \|\Lambda^{-2} \partial_x (w \varphi) \|_{H^s} + \|\Lambda^{-2} \partial_x (z_x ^2) \|_{H^s} + \|\Lambda^{-2} \partial_x (w_x ^2) \|_{H^s} \notag \\
& \quad \hspace{3cm}+ \|\Lambda^{-2} \partial_x (z_x w_x) \|_{H^s} + \delta^{-\frac{1}{2}}\|\Lambda^{-2} \partial_x (z_x \varphi_x) \|_{H^s} + \delta^{-\frac{1}{2}}\|\Lambda^{-2} \partial_x (w_x \varphi_x) \|_{H^s} \notag \\
& \le \|z\|_{H^{s - 1}} + \|w\|_{H^{s - 1}} +\delta^{-\frac{1}{2}} \|\varphi\|_{H^{s - 1}} + \|z^2\|_{H^{s - 1}} + \|w^2\|_{H^{s - 1}} + \|(zw)\|_{H^{s - 1}} +\delta^{-\frac{1}{2}} \|(z \varphi)\|_{H^{s - 1}} \notag \\
& \quad + \delta^{-\frac{1}{2}} \|(w \varphi)\|_{H^{s - 1}} + \|z_x ^2\|_{H^{s - 1}} + \|w_x ^2\|_{H^{s - 1}} + \|(z_x w_x)\|_{H^{s - 1}} + \delta^{-\frac{1}{2}} \|(z_x \varphi_x)\|_{H^{s - 1}} + \delta^{-\frac{1}{2}} \|(w_x \varphi_x)\|_{H^{s - 1}} \notag \\
& \le \|z\|_{H^s} + \|w\|_{H^s} + \delta^{-\frac{1}{2}} \|\varphi\|_{H^s} + \|z\|_{H^{s - 1}} \|z\|_{H^{s - 1}} + \|w\|_{H^{s - 1}} \|w\|_{H^{s - 1}} + \|z\|_{H^{s - 1}} \|w\|_{H^{s - 1}}\notag \\
& \quad  + \delta^{-\frac{1}{2}} \|z\|_{H^{s - 1}} \|\varphi\|_{H^{s - 1}}+ \delta^{-\frac{1}{2}} \|w\|_{H^{s - 1}} \|\varphi\|_{H^{s - 1}} + \|z_x\|_{H^{s - 1}} \|z_x\|_{H^{s - 1}} + \|w_x\|_{H^{s - 1}} \|w_x\|_{H^{s - 1}}\notag \\
& \quad \quad  + \|z_x\|_{H^{s - 1}} \|w_x\|_{H^{s - 1}} + \delta^{-\frac{1}{2}}\|z_x\|_{H^{s - 1}} \|\varphi_x\|_{H^{s - 1}}+ \delta^{-\frac{1}{2}}\|w_x\|_{H^{s - 1}} \|\varphi_x\|_{H^{s - 1}} \notag \\
& \le \|z\|_{H^s} + \|w\|_{H^s} + \delta^{-\frac{1}{2}} \|\varphi\|_{H^s} + \|z\|_{H^s}^2 + \|w\|_{H^s}^2 + \|z\|_{H^s} \|w\|_{H^s} + \delta^{-\frac{1}{2}} \|z\|_{H^s} \|\varphi\|_{H^s}+ \delta^{-\frac{1}{2}} \|w\|_{H^s} \|\varphi\|_{H^s} \notag \\
& \quad \quad + \|z\|_{H^s}^2 + \|w\|_{H^s}^2 + \|z\|_{H^s} \|w\|_{H^s} + \delta^{-\frac{1}{2}}\|z\|_{H^s} \|\varphi\|_{H^s}+ \delta^{-\frac{1}{2}}\|w\|_{H^s} \|\varphi_x\|_{H^s}
\end{align}
So from \eqref{z_engy_est} using Inequalities \eqref{z_est_1} - \eqref{nrm_tilde_F(u)} applying Young's Inequality and assuming $\varphi ,\  w \in H^{s + 1}$ and for $\delta$ very small then $ \delta < \delta^{\frac{1}{2}} .$ Now taking $\varepsilon \to 0$  we get
\begin{align}\label{z_engy_est_fnl}
& \frac{d}{dt}  \| z\|_{H^s}^2  \le C \delta^{\frac{1}{2}} ( 1 + \|z\|_{H^s}^2 + \|z\|_{H^s}^3 )  
\end{align}
Since $z(0) = 0$ 
then similarly as \eqref{ineq_h} we have $$\|z(t)\|_{H^s}^2 \le \tan\Big( 3C \delta^{\frac{1}{2}} t \Big)$$  
So $0 \le 3C \delta^{\frac{1}{2}} t < \frac{\pi}{2} $ \text{ i.e } $0 \le  t < \frac{\pi}{2} \Big(\frac{1}{3C \delta^{\frac{1}{2}}}\Big) ,$ Choose $\delta_0 \in (0, 1)$ such that $\frac{\pi}{2} \Big(\frac{1}{3C \delta_0^{\frac{1}{2}}}\Big) > 1.$

So $T_* > 1,$  then the solution exists $(0, T_*), \forall t \in (0, \delta_0).$
Now
\begin{align}\label{ineq_z}
&\|z(t)\|_{H^s}^2 \le \tan\Big( 3C \delta^{\frac{1}{2}} t \Big) \notag \\
&\|z(1)\|_{H^s}^2 \le \tan\Big( 3C \delta^{\frac{1}{2}} \Big)  \text{   Since } t \le 1. \notag \\
& \implies \|z(1)\|_{H^s}^2 \to 0, \text{  as  } \delta \to 0.
\end{align}
In particular, $u(\delta) = v(1) \to w(1)$ as $\delta \to 0,$ and then we obtain the required limit.

\end{proof}

\section{Appendix}
Keeping in our mind that using $\mathcal{H}_0$ valued control we can reach very close to  any element of $u_0 + \mathcal{H}_0$, In this section  we will see details construction of control $\widehat\eta \in L^2(0, t; \mathcal{H}_0)$ depending on target  so that using this control we can reach very close to any element of $u_0 + \mathcal{H}_1$ starting form $u_0 \in H^{s + 1}(\mathbb{T})$ in time $t > 0.$

More precisely, take an element $u_0 + \Tilde{\eta} \in u_0 + \mathcal{H}_1$ where $$\Tilde{\eta} = \eta - \sum _{i=1}^{2} \varphi_{i}\partial_x\varphi_i - (1 - \partial_{xx})^{-1} \sum _{i=1}^{2} \big(2\varphi_i\partial_x\varphi_i + \partial_x \varphi_i \partial_{xx}\varphi_i \big) $$ where $\varphi_1 , \varphi_2 \in \mathcal{H}_0$ for which we will construct the control $\hat{\eta}.$ we will denote $\mathcal{N}(\varphi) := \varphi\partial_x\varphi - (1 - \partial_{xx})^{-1} \big(2\varphi\partial_x\varphi + \partial_x \varphi \partial_{xx}\varphi \big).$

\begin{center}
\includegraphics[scale= 0.3]{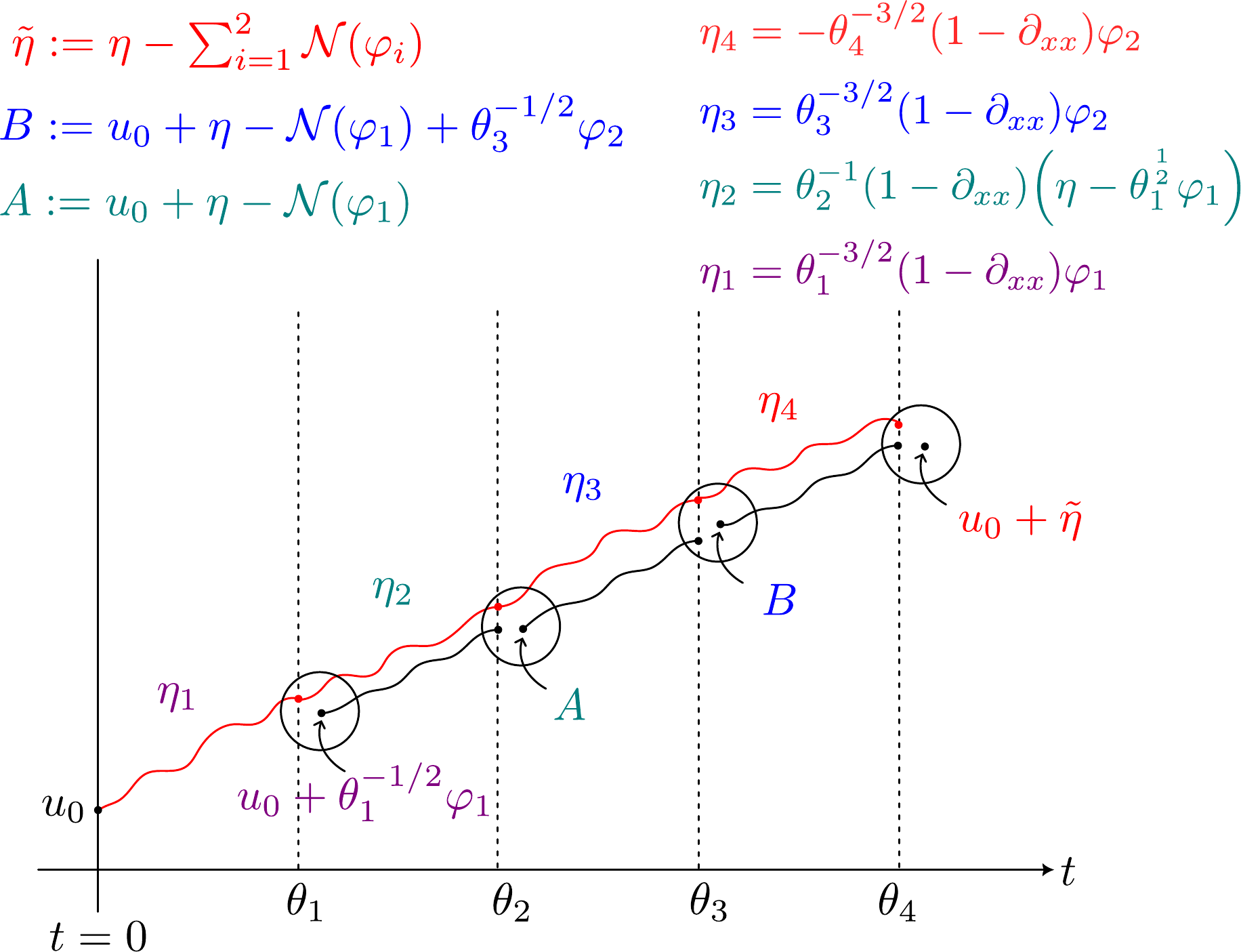}
\end{center}

See the above figure, starting from $u_0 $ using the control $\eta_1$ for the time interval $[0, \theta_1] $ we will reach close to $u_0 + \theta_1^{-\frac{1}{2}} \varphi_1$ at time $t = \theta_1 $ by the Proposition \ref{prp_limit}. Then using the Proposition \ref{prp_cntity} and Proposition \ref{prp_limit} starting from $u_0 + \theta_1^{-\frac{1}{2}} \varphi_1$ by the control $\eta_2 $ we will reach close to the point $ A .$ By similar argument by the control $\widehat\eta : s \to \mathbbm{1}_{[0, \theta_1]} \eta_1 + \mathbbm{1}_{[ \theta_1, \theta_1 + \theta_2]} \eta_2 +  \mathbbm{1}_{[\theta_1 + \theta_2 ,  \theta_1 + \theta_2 + \theta_3]} \eta_3 + \mathbbm{1}_{[\theta_1 +\theta_2 + \theta_3 ,  \theta_1 + \theta_2 + \theta_3 + \theta_4]} \eta_4  $ and using the Lemma \ref{lem_rt} we can reach close to the target $u_0 + \Tilde{\eta}$ starting from $u_0,$ and this control is $\mathcal{H}_0$ valued.

\vspace{5mm}
 \textbf{Acknowledgements.} We would like to express our gratitude
to Mrinmay Biswas for valuable discussions and Subrata Majumder for bringing our attention to this model. The work of the first-named author is supported by MATRICS Research Grant (MTR/2021/00561) . The work of the second-named author is supported by  DST/INSPIRE/$04/2015/002388,$ Third-named author wants
to thank Jiten Kumbhakar and Sandip Samanta for technical support with the figure in the Appendix,
above all grateful to IISER Kolkata for Integrated PhD fellowship.

\bibliographystyle{plain}
\bibliography{reference}

\vspace{8mm}
Department of Mathematics and Statistics, Indian Institute of Science Education and Research Kolkata, Mohanpur – 741 246

Email address:  Shirshendu Chowdhury (shirshendu@iiserkol.ac.in),

 \hspace{2.25cm} Rajib Dutta (rajib.dutta@iiserkol.ac.in), 
 
 \hspace{2.25cm} Debanjit Mondal (wrmarit@gmail.com).

\end{document}